\newtheorem{theorem}{Theorem}[section]
\newtheorem{lemma}[theorem]{Lemma}
\newtheorem{pro}[theorem]{Proposition}
\newtheorem{cor}[theorem]{Corollary}
\theoremstyle{definition}
\newtheorem{definition}[theorem]{Definition}
\newtheorem{example}[theorem]{Example}
\newtheorem{remark}[theorem]{Remark}
\newtheorem{question}[theorem]{Question}
\newtheorem{notation}[theorem]{Notation}
\newtheorem{claim}{Claim}
\def\ssp{selectively sequentially pseudocompact}
\def\sp{selectively pseudocompact}
\def\seqc{sequentially compact}
\def\cc{countably compact}
\def\sel{selectively $\mathcal{S}$}
\def\N{\mathbb{N}}
\def\OP{\mathsf{OP}}
\def\A{\mathsf{O}}
\def\B{\mathsf{P}}
\def\Sp{\mathsf{Sp}}
\def\Ssp{\mathsf{Ssp}}
\def\SIGMA{\mathsf{o}}
\def\TAU{\mathsf{p}}
\def\Seq{\mathrm{Seq}}
\def\disc{\mathrm{disc}}
\def\dom{\mathrm{dom}}
\begin{document}
\title
{Compactness properties defined by open-point games}

\author{A. Dorantes-Aldama}
\address{Departamento de Matem\'{a}ticas, Facultad de Ciencias, Circuito exterior s/n, Ciudad Universitaria, CP. 04510, CDMX, M\'{e}xico}
\email{alejandro\_dorantes@ciencias.unam.mx}
\thanks{The first listed author was supported by CONACyT, M\'exico: Estancia Posdoctoral al Extranjero, 178425/277660}

\author{D. Shakhmatov}
\address{Division of Mathematics, Physics and Earth Sciences\\
Graduate School of Science and Engineering\\
Ehime University, Matsuyama 790-8577, Japan}
\email{dmitri.shakhmatov@ehime-u.ac.jp}
\thanks{The second listed author was partially supported by the Grant-in-Aid for Scientific Research~(C) No.~26400091 by the Japan Society for the Promotion of Science (JSPS)}

\begin{abstract} 
Let $\mathcal{S}$ be a topological property of sequences (such as, for example, ``to contain a convergent subsequence'' or ``to have an accumulation point'').
We introduce the following open-point game  $\OP(X,\mathcal{S})$ on a topological space $X$. 
In the $n$th move, Player $\A$ chooses a non-empty open set   $U_n\subseteq X$, and Player $\B$ responds by 
selecting a point $x_n\in U_n.$ 
Player $\B$ wins the game if the sequence $\{x_n:n\in \N\}$ 
satisfies property $\mathcal{S}$ in $X$; otherwise, Player $\A$ wins.
The (non-)existence of regular or stationary winning strategies in  $\OP(X,\mathcal{S})$ for both players defines new compactness properties of the underlying space $X$. We thoroughly investigate 
these properties and construct examples distinguishing
half of them, for an arbitrary property $\mathcal{S}$ sandwiched between sequential compactness and countable compactness. 
\end{abstract}

\dedicatory{Dedicated to Professor Alexander V. Arhangel'ski\u{\i} \ on the occasion of his 80th anniversary}

\maketitle

{\em All topological spaces are assumed to be Tychonoff.\/}

The symbol $\N$ denotes the set of {\em positive\/} natural numbers.

A point $x$ is said to be an {\em accumulation point\/} of a sequence $\{x_n:n\in\N\}$ of points of a topological space $X$ provided that the set $\{n\in\N: x_n\in U\}$ is infinite for every open neighbourhood $U$ of $x$ in $X$.

\section{Introduction}

\begin{definition}\label{definition_1} 
A topological space $X$ is called:
\begin{itemize}
\item[(i)] {\em sequentially compact} if every sequence in $X$ has a convergent subsequence;
\item[(ii)] {\em countably compact} if every sequence in $X$ has an accumulation point in $X$;
\item[(iii)] {\em pseudocompact\/} if every real-valued continuous function defined on $X$ is bounded;
\item[(iv)] {\em \ssp} if for every sequence $\{U_n:n\in \N\}$ of non-empty open subsets of $X$,
we can choose a point $x_n\in U_n$ for every $n\in\N$ in such a way that the sequence $\{x_n:n\in\N\} $
has a convergent subsequence;
\item[(v)] {\em \sp} if and only if for every sequence $\{U_n:n\in \N\}$ of 
non-empty open subsets of $X$,
we can choose a point $x_n\in U_n$ for every $n\in\N$ in such a way that the sequence $\{x_n:n\in\N\} $ has an accumulation point in $X$.
\end{itemize}
\end{definition}

The properties (i)--(iii) are well known \cite{En}, while selective properties (iv) and (v) were introduced recently in \cite{DoS}. 
It was proved in \cite[Theorem 2.1]{DoS} that 
the property from item (v) is equivalent to 
the notion of strong pseudocompactness 
introduced earlier in~\cite{GO}. 

The following diagram summarizes relations between the properties 
from the above definition.
{\small
\begin{center} \medskip\hspace{1em}
\xymatrix{
\text{sequentially compact}\ar[r]\ar[d]_1 & \text{countably compact}\ar[d]_2 & \\
\text{selectively sequentially pseudocompact}\ar[r]
& \text{selectively pseudocompact}\ar[r]&  \text{pseudocompact}
}
\end{center}
\vspace{10pt}
\begin{center}
Diagram 1.
\end{center}
}

\medskip
None of the arrows in this diagram are reversible; see \cite{DoS}.

Selective sequential pseudocompactness is the only property in Diagram~1 which is fully productive; that is, any Tychonoff product of \ssp\ spaces is \ssp\ \cite{DoS}.
This productivity is a major advantage of selective sequential pseudocompactness when compared to other compactness-like properties.

In this paper, we define two open-point topological games closely related to the class of selectively (sequentially) pseudocompact spaces.
Let $X$ be a topological space.
At round $n$, Player $\A$ chooses a non-empty open subset $U_n$ of $X$, and Player $\B$ responds by selecting a point $x_n\in U_n$. 
In the {\em \ssp\ game\/} $\Ssp(X)$ on $X$, Player $\B$ wins if the sequence 
$\{x_n:n\in\N\}$ has a convergent subsequence; 
otherwise Player $\A$ wins. 
In the {\em selectively pseudocompact game\/} $\Sp(X)$ on $X$,
Player $\B$ wins if the sequence 
$\{x_n:n\in\N\}$ has an accumulation point in $X$;
otherwise Player $\A$ wins. 
The (non-)existence of winning strategies for each player
in the game $\Ssp(X)$ (in the game $\Sp(X)$) defines a compactness-like property of $X$ sandwiched between sequential compactness (countable compactness) and selective sequential pseudocompactness (selective pseudocompactness)
of~$X$.
In this way we develop a fine structure of the area represented by arrows~1 and~2  of Diagram~1. Furthermore, we construct examples showing that the newly introduced notions are mostly distinct. The most sophisticated example is  locally compact, first-countable, zero-dimensional space $X$ such that Player $\B$ has a winning strategy in $\Ssp(X)$ but 
does not have a stationary winning strategy even  in $\Sp(X).$
(A strategy for Player $\B$ is called {\em stationary\/} if it depends only on the last move $U_n$ of the opponent, and not on the whole 
sequence $(U_1,U_2,\dots,U_n)$.)
This example makes essential use of a van Douwen maximally almost disjoint family of subsets of $\N$  due to Raghavan
\cite[Theorem 2.14]{R}.

The paper is organized as follows.
In Section \ref{sec:2}, we introduce the general notion of a topological property of sequences $\mathcal{S}$ and give five examples of such properties in Example \ref{examples}.
Each topological property $\mathcal{S}$ of sequences gives rise to four natural properties of topological spaces; see Definition \ref{def:selectively:S} and Diagram~2. In Section \ref{sec:3} we show that arrows (a) and (b) in this diagram are not reversible (Example \ref{ex:3.3}).
In order to show that arrow (c) is not reversible either,
in Section \ref{sec:4}
we introduce a general open-point game 
$\OP(X,\mathcal{S})$ on $X$ similar to $\Ssp(X)$ and $\Sp(X)$ in which Player $\B$ is declared a winner 
when the sequence $\{x_n:n\in\N\}$ selected by $\B$ satisfies property $\mathcal{S}$ in $X$.
The arrow (c) from Diagram~2 decomposes into four different arrows
$(c_1)$--$(c_4)$
of Diagram~3.
In Section \ref{sec:5} we introduce games $\Ssp(X)$ and $\Sp(X)$ 
as particular cases of the general game $\OP(X,\mathcal{S})$.

Theorem \ref{from:winning:to:stationary} states that a winning strategy for Player $\A$ in $\OP(X,\mathcal{S})$ generates a stationary winning strategy for Player $\A$ in the game $\OP(Z,\mathcal{S})$ played on a product $Z$ of a space $X$ and the one-point compactification $Y$ of a discrete space of cardinality $|X|$.

In Section \ref{MAD:family},
we construct a maximal almost disjoint family $\mathscr{F}$ consisting of injections from an infinite subset of $\N$ to a fixed infinite set $D$ built in Theorem \ref{set-theoretic:result}.
When $D=\N$, the family $\mathscr{F}$ is nothing but an 
``injective version'' of a van Douwen MAD  family of D. Raghavan
\cite[Theorem 2.14]{R}.
The family $\mathscr{F}$ is used in the construction 
a locally compact first countable zero-dimensional space 
$X$ such that  Player $\B$ has a winning strategy in $\Ssp(X)$ but 
does not have a stationary winning strategy even  in $\Sp(X)$; see Theorem 
\ref{arrow:3}.
This example is consequently employed in 
Corollary \ref{cor:5.11} 
showing that arrow $(c_1)$ of Diagram 3
is not reversible. 
In Section \ref{sec:6}, we 
give an example showing that arrow $(c_4)$ of Diagram 3
is not reversible (Corollary \ref{not:arrow:c4}).
The reversibility of arrows $(c_2)$ and $(c_3)$ of Diagram 3 remains unclear; see Question
\ref{que:7.1}. The reversibility of arrow $(c_2)$  is equivalent to determinacy of the game
$\OP(X,\mathcal{S})$.

\section{Topological properties of sequences}
\label{sec:2}

For a set $X$,
we identify $X^\N$ with the set of all sequences $\{x_n:n\in\N\}$ in $X$.

\begin{definition}
{\em A topological property of sequences\/} is a class 
$\mathcal{S}=\{\mathcal{S}_X: X$ is a topological space$\}$,
where $\mathcal{S}_X\subseteq X^\N$
for every topological space $X$, 
such that
\begin{equation}
\label{eq:extensions}
\mathcal{S}_Y\subseteq \mathcal{S}_X
\text{ whenever }
Y
\text{ is a subspace of }
X.
\end{equation}

When $s\in \mathcal{S}_X$, 
we shall say that 
the sequence $s\in X^{\N}$ 
{\em satisfies property $\mathcal{S}$ in $X$\/}.
\end{definition}

\begin{remark}
\label{relative:remark}
Condition \eqref{eq:extensions} means that, for every topological property of sequences $\mathcal{S}$, if a sequence of points in a subspace $Y$ of a topological space $X$ satisfies $\mathcal{S}$ in $Y$, then it also satisfies $\mathcal{S}$ in $X$.
\end{remark}

\begin{definition}\label{def:selectively:S}
Let 
$\mathcal{S}$ be a 
topological property of sequences. We shall say that:
\begin{itemize}
\item[(i)] a topological space $X$ {\em satisfies $\mathcal{S} $\/} if $\mathcal{S}_X=X^\N$; that is,
if
every sequence 
in $X$ satisfies $\mathcal{S}$.

\item[(ii)] 
a subspace $Y$ of a topological space $X$ {\em relatively satisfies $\mathcal{S} $ in $X$\/} if 
$Y^\N\subseteq \mathcal{S}_X$;
that is, if
every sequence in $Y$ satisfies $\mathcal{S}$ in $X.$

\item[(iii)] a topological space $X$ is {\em \sel} 
if for every sequence 
$\{U_n:n\in\N\}$ of non-empty open subsets of $X,$
one can select a point $x_n\in U_n$ for $n\in \N$ 
in such a way that the resulting sequence $\{x_n:n\in\N\}$ satisfies $\mathcal{S}$
in $X$.
\end{itemize}
\end{definition}

Diagram~2 below
holds for every topological property of sequences $\mathcal{S}$.
The implication (a) is obvious, the implication (b) follows from 
Remark \ref{relative:remark}, and the implication (c) 
is straightforward.

\begin{center} 
\medskip\hspace{1em}
{\small
\xymatrix{
X \text{ satisfies } \mathcal{S}\ar[d]_{(a)}\\
X 
\text{ has a dense subspace satisfying } 
\mathcal{S}\ar[d]_{(b)}\\
X 
\text{ has a dense subspace that relatively satisfies } 
\mathcal{S}\ar[d]_{(c)}\text{ in }X\\
X \text{ is \sel }
}
}
\end{center}
\medskip
\begin{center}
Diagram 2.
\end{center}
\medskip

Let us mention five examples of topological properties of sequences $\mathcal{S}$.

\begin{example}
\label{examples}
For each topological space $X$, define
$\mathcal{S}_X$ to be the set of all sequences
$s\in X^\N$ such that:
\begin{itemize}
\item[(i)]
$s$ has
a subsequence $\{s(n_j):j\in\N\}$
converging to some point $x\in X$;
\item[(ii)]  
$s$ has
a subsequence $\{s(n_j):j\in\N\}$
whose closure in $X$ is compact;
\item[(iii)] 
for every free ultrafilter $p$ on $\N$,
there exists a subsequence $\{s(n_j):j\in\N\}$ of $s$ which 
$p$-converges to some point $x\in X$; that is,
 $\{j\in\N: s({n_j})\in V\}\in p$ for every open neighborhood $V$ of $x$ in $X$;
\item[(iv)] 
$s$ has
a subsequence $\{s(n_j):j\in\N\}$ which 
$p$-converges to some point $x\in X$, for a fixed ultrafilter $p$ on $\N$;
\item[(v)] 
$s$
has an accumulation point in $X$. 
\end{itemize}
\end{example}

\begin{remark}
\label{names:remark}
\begin{itemize}
\item[(i)]
For the property $\mathcal{S}$ defined in item (i) of 
Remark \ref{examples}, a topological space $X$ (relatively) satisfies 
$\mathcal{S}$ if and only if $X$ is (relatively) sequentially compact,
and the class of \sel\ spaces coincides with the class of selectively sequentially pseudocompact spaces.
\item[(ii)]
Let $\mathcal{S}$ be the property defined in item (ii) of 
Remark \ref{examples}. The topological spaces that satisfies $\mathcal{S}$ are called totally countably compact in
\cite{M}.
The topological spaces that are \sel\ are called totally pseudocompact in \cite[Definition 6]{M}.
\item[(iii)]
For the property $\mathcal{S}$ defined in item (v) of 
Remark \ref{examples}, a topological space $X$ (relatively) satisfies 
$\mathcal{S}$ if and only if $X$ is (relatively) countably compact,
and the class of \sel\ spaces coincides with the class of 
selectively pseudocompact spaces.
\end{itemize}
\end{remark}

\begin{definition}
\label{comparing:strength:of:R:and:S}
Given two topological properties of sequences $\mathcal{R}$ and
$\mathcal{S}$, we say that {\em $\mathcal{R}$ is stronger than
$\mathcal{S}$\/} (and {\em $\mathcal{S}$ is weaker than $\mathcal{R}$\/}) provided that $\mathcal{R}_X\subseteq \mathcal{S}_X$ for every topological space $X$; that is, if every sequence in a topological space
$X$ that satisfies $\mathcal{R}$ in $X$ also satisfies $\mathcal{S}$ in $X$.
\end{definition}

\begin{remark}
\label{decreasing:strength}
The properties in items (i)--(v) of Example \ref{examples}
are listed in the order of decreasing strength. Implication (ii)$\to $(iii) is due to Bernstein \cite{Be},
who proved that each sequence in a compact space $X$
has a  $p$-limit 
point for every free ultrafilter $p$ on $\N.$
\end{remark}

\begin{remark}\label{remark:pseudocompact}
Let $\mathcal{S}$ be a topological property of sequences which is stronger than item (v) of Example \ref{examples}.

(i) If $X$ satisfies $\mathcal S$, then $X$ is countably compact. 

(ii) If $X$ is \sel,
then $X$ is pseudocompact. 

\end{remark}

\section{On reversibility of arrows in Diagram 2}
\label{sec:3}

Let us recall two examples from the literature.

\begin{example}\label{plank}
Let $\alpha$ be an ordinal and $[0,\alpha)$ be the space of all ordinals less than $\alpha$ with the order topology. 
The space {\em $T= [0,\omega+1)\times [0,\omega_1+1)\setminus \{(\omega,\omega_1)\}$ (is first countable and)
has a dense \seqc\ subspace but is not countably compact\/}. 
Indeed, the sequence 
$\{(n,\omega_1):n\in \N\} $ does not have an accumulation point in $T$,
so $T$ is not countably compact.
Since 
$[0,\omega+1)$
is sequentially compact, 
$D=[0,\omega+1)\times [0,\omega_1)$ is sequentially compact.
Finally, note that $D$
is dense in $T.$ 
\end{example}

\begin{example}\label{mrowka}
Let $\mathcal{A}$ be an arbitrary maximal almost disjoint family
of subsets of $\N$. Consider
the Mr\'owka space $X=\N\cup \mathcal{A}$ associated with  $\mathcal{A}$ \cite[3.6.I]{En}.
Then {\em $X$ has a dense relatively \seqc\ subspace $\N$, yet 
does not contain any dense \cc\ subspace\/}. A straightforward verification of this fact is left to the reader.
\end{example}

We shall now present examples demonstrating that implications (a) and (b) of Diagram 2 are not reversible, for any topological property $\mathcal{S}$ of sequences whose strength lies in between items (i) and
(v) 
of Example \ref{examples}.

\begin{example}
\label{ex:3.3}
Let $\mathcal{S}$ be a topological property of sequences which is weaker than  item (i) but stronger than 
item (v) of Example \ref{examples}.
\begin{itemize}
\item[(i)]
The space $T$ from Example \ref{plank} has a dense \seqc\ subspace $Y$. Since $\mathcal{S}$ is weaker than sequential compactness, $Y$ satisfies $\mathcal{S}$. Since $\mathcal{S}$ is stronger than countable compactness and $T$ is not \cc, it follows that $T$ does not satisfy $\mathcal{S}$. This shows that {\em arrow (a) 
in Diagram 2 is not reversible\/}.

\item[(ii)] The space $X$ from Example \ref{mrowka} has a dense subspace $Y$ which is relatively \seqc\ in $X$. Since $\mathcal{S}$ is weaker than sequential compactness, $Y$
relatively satisfies $\mathcal{S}$ in $X$. Since $\mathcal{S}$ is stronger than countable compactness and $X$ does not have a dense \cc\ subspace,
it follows that no dense subspace of $X$ satisfies $\mathcal{S}$.
 This shows that {\em arrow (b) 
in Diagram 2 is not reversible\/}.
\end{itemize}
\end{example}

Our goal is 
to show that arrow (c) of Diagram 2 is not reversible either. 
In fact, we 
do even more. In the next section, we 
introduce a topological game $\OP(X,\mathcal{S})$ on a space $X$ such that:
\begin{itemize}
\item
  Player $\B$ 
has a stationary winning strategy in this game on $X$ if and only if 
$X$ contains a dense subspace $Y$ which relatively satisfies $\mathcal{S}$ in $X$, and
\item
If Player $\A$ does not have a stationary winning strategy in this game on $X$, then $X$ is \sel. 
\end{itemize}
In this way we 
develop a fine structure of the area represented by arrow (c) of Diagram~2; see Diagram~3.
Furthermore, we produce two examples showing that half of the newly introduced notions are distinct; see Corollaries \ref{cor:5.11} and \ref{not:arrow:c4} below.

\section{Open-point game $\OP(X,\mathcal{S})$}

\label{sec:4}

Let $\mathcal{S}$ be a a fixed topological property of sequences.
For a topological space $X$,
we define the 
open-point topological game $\OP(X,\mathcal{S})$ on $X$ between Player $\A$ and Player $\B$ as follows. 
An infinite sequence $w =(U_1,x_1,U_2,x_2,\ldots)$ such that
$U_n$ is a non-empty open set and $x_n\in U_n$ for every $n\in \N$ 
 is called 
a {\em play in\/} $\OP(X,\mathcal{S})$.
Given a play $w =(U_1,x_1,U_2,x_2,\ldots)$ in $\OP(X,\mathcal{S})$, we will say that {\em Player $\B$ wins $w $} if 
$\{x_{n}:n\in\N\}$ satisfies $\mathcal{S}$ in $X$, otherwise, {\em Player $\A$ wins $w.$} 

\begin{notation}
(i) Given a set $Y$, we use $\Seq(Y)$ to denote the set of all finite sequences $(y_1,\dots,y_n)$ of elements of $Y$.
We  include
the empty sequence $\emptyset$ in $\Seq(Y)$.

(ii)
For a topological space $X$, we use  $\mathcal{O}(X)$ to denote the family of all non-empty open subsets of $X$.
\end{notation}

\begin{definition}
\label{def:game:OP(X,S)}
(i) 
A function $\SIGMA :\Seq(X)\to\mathcal{O}(X)$ 
is called a
{\em  strategy for Player $\A$} in $\OP(X,\mathcal{S})$. 
 {\em A strategy for Player $\B$} in $\OP(X,\mathcal{S})$ is a function $\TAU: \Seq(\mathcal{O}(X))\setminus\{\emptyset\}\to X$ such that 
\begin{equation}
\label{eq:rule:of:the:game} 
\TAU(U_1,U_2,\dots,U_n)\in U_n
\text{ for every }
(U_1,U_2,\dots,U_n)\in \Seq(\mathcal{O}(X))\setminus\{\emptyset\}.
\end{equation}

(ii) 
Two strategies $\SIGMA $ and $\TAU $ for Players $\A$ and $\B$, respectively, {\em produce the play\/} 
$w_{\SIGMA,\TAU}$ in $\OP(X,\mathcal{S})$ as follows.
Player $\A$ starts with 
\begin{equation}
\label{eq:U_1}
U_1=\SIGMA(\emptyset),
\end{equation}
 and Player $\B$ responds with 
 \begin{equation}
\label{eq:x_1} 
 x_1=\TAU(U_1).
 \end{equation}
At the $n$th move, for $n\ge 2$, Player $\A$ selects
\begin{equation}
\label{eq:U_n}
U_n=\SIGMA(x_1,x_2,\dots,x_{n-1})
\end{equation}
  and 
Player $\B$ responds with 
\begin{equation}
\label{eq:7z}
x_n=\TAU(U_1,U_2,\dots,U_n).
\end{equation}
When all rounds are done,
we define
\begin{equation}
w_{\SIGMA,\TAU}=(U_1,x_1,U_2,x_2,\dots,U_n,x_n,\dots).
\end{equation}

(iii)
A strategy $\SIGMA$ for Player $\A$ is a {\em winning strategy} in $\OP(X,\mathcal{S})$ if 
Player $\A$  wins $w_{\SIGMA,\TAU} $
for every strategy $\TAU  $ for Player $\B$ in $\OP(X,\mathcal{S})$.  A strategy $\TAU $ for Player $\B$ is a {\em winning strategy} in $\OP(X,\mathcal{S})$ if 
Player $\B$ wins $w_{\SIGMA,\TAU} $
for every strategy $\SIGMA $ for Player $\A$ in $\OP(X,\mathcal{S})$.  

(iv) A strategy $\SIGMA $ 
for Player $\A$ is 
{\em stationary\/} if 
\begin{equation}
\label{eq:sigma:stationary}
\SIGMA (x_1,x_2,\dots,x_n)=\SIGMA (x_n)
\text{ for every }
(x_1,x_2,\dots,x_n)\in\Seq(X)\setminus \{\emptyset \}.
\end{equation}

(v) A  
strategy $\TAU  $ for Player $\B$ is {\em stationary\/} if 
\begin{equation}
\label{eq:tau:stationary}
\TAU (U_1,U_2,\dots,U_n)=\TAU  (U_n)
\text{ for every }
(U_1,U_2,\dots,U_n)\in \Seq(\mathcal{O}(X))\setminus\{\emptyset\}.
\end{equation}

(vi) The game $\OP(X,\mathcal{S})$
 is {\em determined\/} if either
$\A$ or
 $\B$ has a winning strategy in $\OP(X,\mathcal{S})$.
\end{definition}

\begin{lemma}\label{sequence:satisfies:S} 
If 
$\TAU$
is a stationary winning strategy for $\B$ in $\OP(X,\mathcal{S})$, then the sequence $\{\TAU(U_n):n\in \N\}$ satisfies $\mathcal{S}$ in $X$ for every sequence
$(U_1, U_2,\dots, U_n,\dots)\in \mathcal{O}(X)^\N$.
\end{lemma}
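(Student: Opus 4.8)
The plan is to build, from the given sequence of open sets, a strategy for Player $\A$ that forces $\TAU$ to produce precisely the points $\TAU(U_n)$, and then to read off the conclusion from the fact that $\TAU$ is winning.

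First I would fix an arbitrary sequence $(U_1,U_2,\dots,U_n,\dots)\in\mathcal{O}(X)^\N$ and define a strategy $\SIGMA\colon\Seq(X)\to\mathcal{O}(X)$ for Player $\A$ by letting $\SIGMA(s)=U_{k+1}$ whenever $s$ is a finite sequence of length $k$; in particular $\SIGMA(\emptyset)=U_1$. This is a legitimate strategy for $\A$ since every $U_{k+1}$ belongs to $\mathcal{O}(X)$; informally, $\A$ simply ignores $\B$'s responses and plays the sets $U_1,U_2,\dots$ one after another.

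Next I would compute the play $w_{\SIGMA,\TAU}=(V_1,x_1,V_2,x_2,\dots)$ produced by $\SIGMA$ and $\TAU$ as in Definition~\ref{def:game:OP(X,S)}(ii), and show by induction on $n$ that $V_n=U_n$ and $x_n=\TAU(U_n)$ for every $n\in\N$. The base case uses \eqref{eq:U_1} and \eqref{eq:x_1}: $V_1=\SIGMA(\emptyset)=U_1$ and $x_1=\TAU(V_1)=\TAU(U_1)$. For the inductive step with $n\ge 2$, equation \eqref{eq:U_n} gives $V_n=\SIGMA(x_1,\dots,x_{n-1})=U_n$, because the argument has length $n-1$, and \eqref{eq:7z} gives $x_n=\TAU(V_1,\dots,V_n)=\TAU(U_1,\dots,U_n)$, which equals $\TAU(U_n)$ by stationarity of $\TAU$, that is, by \eqref{eq:tau:stationary}.

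Finally, since $\TAU$ is a winning strategy for $\B$, Player $\B$ wins $w_{\SIGMA,\TAU}$, which by definition means that $\{x_n:n\in\N\}$ satisfies $\mathcal{S}$ in $X$; as $x_n=\TAU(U_n)$ for all $n$, this is exactly the assertion that $\{\TAU(U_n):n\in\N\}$ satisfies $\mathcal{S}$ in $X$. I expect the only real subtlety to be the inductive step, where stationarity of $\TAU$ is what lets one replace the history-dependent value $\TAU(U_1,\dots,U_n)$ dictated by the rules of the play with the single-argument value $\TAU(U_n)$ appearing in the statement; everything else is bookkeeping about how a pair of strategies generates a play.
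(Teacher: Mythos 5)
Your proposal is correct and follows essentially the same route as the paper: define the strategy $\SIGMA$ for Player $\A$ that plays $U_1,U_2,\dots$ regardless of $\B$'s responses, observe that the resulting play forces $x_n=\TAU(U_1,\dots,U_n)=\TAU(U_n)$ by stationarity, and conclude from the fact that $\TAU$ is winning. The only difference is that you spell out the induction identifying the play, which the paper leaves implicit.
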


\begin{proof}
Let $(U_1, U_2,\dots, U_n,\dots)\in \mathcal{O}(X)^\N$ be a fixed infinite sequence.
Define 
the strategy $\SIGMA:\Seq(X)\to \mathcal{O}(X)$ for $\A$  in $\OP(X,\mathcal{S})$
by $\SIGMA(\emptyset)=U_1$ and 
$\SIGMA(x_1,\dots,x_n)=U_{n+1}$ for 
$(x_1,\dots,x_n)\in\Seq(X)\setminus\{\emptyset\}$.

Let $w_{\SIGMA,\TAU}$ be the play produced by 
strategies $\SIGMA$ and $\TAU$.
Since $\TAU$ is a winning strategy
for $\B$ in $\OP(X,\mathcal{S})$,
Player $\B$ wins $w_{\SIGMA,\TAU}$, which means that
the sequence $\{x_n:n\in\N\}$ satisfies property $\mathcal{S}$ in $X$. Since $\TAU$ is a stationary 
strategy,
it follows from \eqref{eq:7z} and \eqref{eq:tau:stationary} and 
that $x_n=\TAU(U_1,\dots,U_n)=\TAU(U_n)$.
\end{proof}

\begin{theorem}\label{stationary_winning_strategy}
For every space $X$, 
Player $\B$ has a stationary winning strategy in $\OP(X,\mathcal{S})$
if and only if $X$ has a dense subspace $D$ 
relatively satisfying $\mathcal{S}$ in $X$. 
\end{theorem}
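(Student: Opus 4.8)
The plan is to prove the two implications of the biconditional separately.

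\medskip

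\noindent\textbf{The easy direction (($\Leftarrow$)).} Suppose $X$ has a dense subspace $D$ relatively satisfying $\mathcal{S}$ in $X$; so $D^\N \subseteq \mathcal{S}_X$. I define a stationary strategy $\TAU$ for Player $\B$ as follows: for each non-empty open $U \subseteq X$, density of $D$ gives $U \cap D \neq \emptyset$, so choose (using the axiom of choice) a point $\TAU(U) \in U \cap D$; then for a sequence $(U_1,\dots,U_n) \in \Seq(\mathcal{O}(X))\setminus\{\emptyset\}$ put $\TAU(U_1,\dots,U_n) = \TAU(U_n)$. This satisfies \eqref{eq:rule:of:the:game} since $\TAU(U_n) \in U_n$, and it is stationary by construction. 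To see it is winning: in any play $w_{\SIGMA,\TAU}$, every selected point $x_n = \TAU(U_n)$ lies in $D$, so the sequence $\{x_n : n \in \N\}$ belongs to $D^\N \subseteq \mathcal{S}_X$, i.e. it satisfies $\mathcal{S}$ in $X$. Hence Player $\B$ wins every play, so $\TAU$ is a stationary winning strategy.

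\medskip

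\noindent\textbf{The harder direction (($\Rightarrow$)).} Suppose Player $\B$ has a stationary winning strategy $\TAU$ in $\OP(X,\mathcal{S})$. The natural candidate for the dense subspace is $D = \{\TAU(U) : U \in \mathcal{O}(X)\}$, the set of all points that $\TAU$ can output. First, $D$ is dense: for any non-empty open $U$, the point $\TAU(U) \in U$ lies in $D \cap U$. The real work is showing $D$ relatively satisfies $\mathcal{S}$ in $X$, i.e. $D^\N \subseteq \mathcal{S}_X$. Take an arbitrary sequence $(d_n)_{n\in\N}$ of points of $D$. For each $n$, by definition of $D$ pick $U_n \in \mathcal{O}(X)$ with $d_n = \TAU(U_n)$. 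Now apply Lemma~\ref{sequence:satisfies:S} to the sequence $(U_1, U_2, \dots) \in \mathcal{O}(X)^\N$: it yields that $\{\TAU(U_n) : n \in \N\} = \{d_n : n \in \N\}$ satisfies $\mathcal{S}$ in $X$. Since $(d_n)$ was an arbitrary element of $D^\N$, we conclude $D^\N \subseteq \mathcal{S}_X$, as required.

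\medskip

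\noindent\textbf{Remarks on obstacles.} The main (and essentially only) subtlety is the ($\Rightarrow$) direction, and it is entirely handled by Lemma~\ref{sequence:satisfies:S}, which has already done the heavy lifting: a stationary winning strategy must win not just plays arising from a strategy of the opponent, but — via the reduction in that lemma — against \emph{any} prescribed sequence of open sets, so the outputs of $\TAU$ along any such sequence automatically satisfy $\mathcal{S}$. One should be slightly careful that the definition of ``relatively satisfies $\mathcal{S}$ in $X$'' (Definition~\ref{def:selectively:S}(ii)) asks for \emph{every} sequence in $D$ to satisfy $\mathcal{S}$ in $X$, which is exactly $D^\N \subseteq \mathcal{S}_X$; the argument above verifies this directly. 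No appeal to condition \eqref{eq:extensions} is needed here since we work with $\mathcal{S}_X$ throughout rather than passing to $\mathcal{S}_D$. The use of the axiom of choice in the ($\Leftarrow$) direction is harmless and standard.
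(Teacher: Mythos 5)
Your proof is correct and follows essentially the same route as the paper's: the backward direction builds the stationary strategy from chosen points $d_U\in U\cap D$, and the forward direction takes $D=\{\TAU(U):U\in\mathcal{O}(X)\}$ and invokes Lemma~\ref{sequence:satisfies:S} to verify that $D$ relatively satisfies $\mathcal{S}$ in $X$. No gaps.
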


\proof
Suppose that 
$\TAU$
is a stationary winning strategy for $\B$ in $\OP(X,\mathcal{S})$. 
Since $\TAU(U)\in U$ for every $U\in \mathcal{O}(X)$ by \eqref{eq:rule:of:the:game} and \eqref{eq:tau:stationary}, 
the set
$D=\{\TAU(U):U\in \mathcal{O}(X)\}$ 
is 
dense in $X$. 
It remains only to check that $D$ 
relatively satisfies $\mathcal{S}$ in $X$. 
Let $\{x_n:n\in\N\}$ be a sequence of points of $D$.
For every $n\in\N$, choose $U_n\in \mathcal{O}(X)$
such that $x_n=\TAU(U_n)$. Now Lemma  \ref{sequence:satisfies:S}
implies that the sequence $\{x_n:n\in\N\}$ satisfies $\mathcal{S}$
in $X$.
Therefore, $D$  relatively satisfies $\mathcal{S}$ in $X$.  

Next, suppose that $D$ is a dense subspace of $X$  relatively satisfying $\mathcal{S}$ in $X$. 
For every $U\in \mathcal{O}(X)$, the intersection 
$U\cap D$ is non-empty, so we can choose a point $d_U\in U\cap D$. 
Define the strategy
$\TAU: \Seq(\mathcal{O}(X))\setminus\{\emptyset\}\to X$
for $\B$ by 
\begin{equation}
\label{eq:def:tau:by:d}
\TAU(U_1,U_2,\dots,U_n)=d_{U_n}
\text{ for }
(U_1,U_2,\dots,U_n)\in \Seq(\mathcal{O}(X))\setminus\{\emptyset\}.
\end{equation}
It follows from \eqref{eq:tau:stationary}
and 
\eqref{eq:def:tau:by:d}
that
$\TAU$ is stationary. 

To show that $\TAU$ is a winning strategy for $\B$, 
let $\SIGMA$
be an arbitrary strategy for $\A$ in $\OP(X,\mathcal{S})$.
Let $w_{\SIGMA,\TAU}$ be the play produced by following strategies $\SIGMA$ and $\TAU$.
It follows from \eqref{eq:7z} and
\eqref{eq:def:tau:by:d}
that $x_n=\TAU(U_1,U_2,\dots,U_n)=d_{U_n}\in D$ for every $n\in\N$.
By hypothesis, the sequence 
$\{x_n:n\in\N\}$
satisfies $\mathcal{S}$ in $X$.
Hence, $\B$ wins the play $w_{\SIGMA,\TAU}$. Since $\SIGMA $ was an arbitrary strategy for $\A$, this means that $\TAU$ 
is a 
winning strategy in $\OP(X,\mathcal{S})$ for $\B$.
\endproof

\begin{pro}\label{O:stationary:wins}
Let $\mathcal{S}$ be a topological property of sequences such that:
\begin{itemize}
\item[(i)]
 for every space $X$
and each strictly increasing function $\varphi:\N\to\N$,
the inclusion $s\circ\varphi\in\mathcal{S}_X$ implies $s\in\mathcal{S}_X$; and 
\item[(ii)]
 for every space $X,$ every constant sequence belongs to $\mathcal{S}_X.$ 
\end{itemize}
If $X$ is not a \sel\ space, then $\A$ has a stationary winning strategy in $\OP(X,\mathcal{S}).$  
\end{pro}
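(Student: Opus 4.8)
The plan is to unwind what it means for $X$ \emph{not} to be \sel\ and then manufacture a stationary strategy for $\A$ that exploits the failure directly. First I would record the negation of Definition \ref{def:selectively:S}(iii): if $X$ is not \sel, there is a fixed sequence $(V_1,V_2,\dots,V_n,\dots)\in\mathcal{O}(X)^\N$ of non-empty open sets such that for \emph{every} choice $y_n\in V_n$ ($n\in\N$) the resulting sequence $\{y_n:n\in\N\}$ fails to satisfy $\mathcal{S}$ in $X$. This ``bad'' sequence of open sets is the only ingredient I need.

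The idea is that $\A$ should force $\B$, over the course of a play, to select a \emph{subsequence} of a selector for $(V_n)$, which by hypothesis (i) will still fail $\mathcal{S}$, contradicting any winning ambitions of $\B$. Concretely, I would define the stationary strategy $\SIGMA:\Seq(X)\to\mathcal{O}(X)$ for $\A$ as follows: fix a bijection-like bookkeeping enumeration, but more simply, let $\SIGMA(\emptyset)=V_1$, and for a one-element history $\SIGMA(x)=V_{k}$ where $k$ is chosen so that $V_k$ has not yet been ``used'' — since $\SIGMA$ must be stationary it can only see the single point $x$, not the round number, so this naive count does not work directly. The fix is to have $\A$ play from the tail: for $x\in X$ define $\SIGMA(x)=V_{m(x)}$ where $m(x)$ is, say, $1$ plus the number of indices $i$ with $x\in \overline{V_i}$ — no; again $\A$ cannot count rounds statelessly. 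The clean approach that does work is this: enumerate $\N=\bigcup_{j\in\N}N_j$ into infinitely many infinite pieces is overkill; instead, note that hypothesis (ii) lets $\A$ \emph{waste} moves harmlessly. Let me instead use the standard trick: put $\SIGMA(\emptyset)=V_1$; and for every non-empty $(x_1,\dots,x_n)\in\Seq(X)$ put $\SIGMA(x_1,\dots,x_n)=V_{n+1}$. This is a winning strategy for $\A$ but it is \emph{not} stationary. To make it stationary we compose with the ``telescoping into one space'' idea: by Theorem \ref{stationary_winning_strategy}'s philosophy one cannot in general upgrade to stationary, so the genuine content of the proposition must be that the \emph{failure} of \sel\ is strong enough. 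I would therefore revisit: since for the \emph{fixed} $(V_n)$ every selector fails $\mathcal{S}$, and by (i) passing to subsequences preserves failure, $\A$ can afford to \emph{repeat} open sets. Define $\SIGMA(x_1,\dots,x_n)=V_{k}$ where $k=\min\{i\in\N: x_j\notin V_i \text{ for all } j\le n\}$ if this set is non-empty, and $\SIGMA(x_1,\dots,x_n)=V_1$ otherwise; and $\SIGMA(\emptyset)=V_1$. The set above depends on the whole history, so this is still non-stationary. Hence the honest route is: the proposition is false as a verbatim stationary claim \emph{unless} we use that $\A$ only needs to ensure $\{x_n:n\in\N\}$ is a \emph{sub}sequence of some selector.

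So here is the plan I will actually execute. Define the stationary strategy by a \emph{fixed assignment of open sets to points of $X$}: pick any surjection is impossible since $X$ need not be countable; instead, for each $x\in X$ choose \emph{one} index $\iota(x)\in\N$ such that $x\notin V_{\iota(x)}$ — this is possible precisely when $x\notin\bigcap_{i}V_i$; if $x\in\bigcap_i V_i$ for some $x$, set $\iota(x)$ arbitrarily, say $\iota(x)=1$. Put $\SIGMA(\emptyset)=V_1$ and $\SIGMA(x_1,\dots,x_n)=V_{\iota(x_n)}$ for non-empty histories; this \emph{is} stationary by \eqref{eq:sigma:stationary}. Now in a play $w_{\SIGMA,\TAU}$ we get $U_1=V_1$, and $U_{n+1}=V_{\iota(x_n)}$ with $x_n\in U_n$; by the choice of $\iota$ we have $x_{n+1}\in U_{n+1}=V_{\iota(x_n)}\not\ni x_n$, so consecutive selected points are distinct and, more usefully, $x_{n+1}\in V_{\iota(x_n)}$. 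The sequence of indices $1,\iota(x_1),\iota(x_2),\dots$ may repeat, but $x_{n}\in V_{k_n}$ where $k_1=1$, $k_{n+1}=\iota(x_n)$; grouping the rounds by the value of $k_n$ and using (i) on the relevant subsequences, together with (ii) to dispose of any index class that $\B$ visits only finitely often, one checks that if $\{x_n:n\in\N\}$ satisfied $\mathcal S$ then some selector of $(V_n)$ would satisfy $\mathcal S$, a contradiction. I expect the only real obstacle is exactly this last combinatorial bookkeeping — verifying that the possibly-repeating index sequence still yields, via (i) and (ii), a genuine selector of $(V_n)$ satisfying $\mathcal{S}$; the construction of $\SIGMA$ and its stationarity are immediate, and the negation of \sel\ is the only hypothesis on $X$ used.
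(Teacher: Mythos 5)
Your construction of the stationary strategy has a genuine gap, and it is not mere ``combinatorial bookkeeping'': the strategy itself is not winning. You set $\SIGMA(x_1,\dots,x_n)=V_{\iota(x_n)}$ where $\iota(x)$ is \emph{any} index with $x\notin V_{\iota(x)}$. This permits the sequence of indices played by $\A$ to cycle. Concretely, suppose $V_1\cap V_2=\emptyset$, take $a\in V_1$ with $\iota(a)=2$ and $b\in V_2$ with $\iota(b)=1$; then Player $\B$ can answer $a,b,a,b,\dots$, and the selected sequence is $a,b,a,b,\dots$. For $\mathcal{S}=$ ``has a convergent subsequence'' (item (i) of Example \ref{examples}) this sequence satisfies $\mathcal{S}$ (it contains the constant subsequence $a,a,\dots$), so $\B$ wins this play even though $(V_n)$ witnesses that $X$ is not \sel. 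The underlying problem is that the hypothesis only says that \emph{selectors} of $(V_n)$ --- sequences choosing one point from each $V_n$, for every $n\in\N$ --- fail $\mathcal{S}$, and condition (i) only transfers this failure to sequences of the form $s\circ\varphi$ with $\varphi:\N\to\N$ strictly increasing. A play whose index sequence revisits the same $V_i$ infinitely often is not of this form, and neither (i) nor (ii) (which concerns genuinely constant sequences, not arbitrary sequences confined to one $V_i$) can rescue it; note also that $\mathcal{S}$ is not assumed to pass \emph{down} to subsequences, so you cannot split the play into index classes and argue classwise. Hence the step you defer at the end cannot be completed.

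The repair is exactly the point you circled around but did not land on: one must first prove that for every $x\in X$ the set $M_x=\{n\in\N:x\in V_n\}$ is \emph{finite}. This is where (i) and (ii) are really used in the paper's proof: if $M_x$ were infinite, the selector equal to $x$ on $M_x$ and arbitrary elsewhere would have a constant subsequence along $M_x$, hence would belong to $\mathcal{S}_X$ by (ii) and (i), contradicting the choice of $(V_n)$. With that in hand, set $m(x)=\sup M_x+1$ (and $m(x)=1$ if $M_x=\emptyset$) and play $\SIGMA(x_1,\dots,x_n)=V_{m(x_n)}$. Since $x_n\in V_{\varphi(n)}$ forces $\varphi(n)\in M_{x_n}$ and hence $m(x_n)>\varphi(n)$, the resulting index sequence $\varphi(1)=1$, $\varphi(n+1)=m(x_n)$ is strictly increasing, so the selected points genuinely form $s\circ\varphi$ for some selector $s$ of $(V_n)$, and the contrapositive of (i) shows they fail $\mathcal{S}$. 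Your stationarity observation and your use of the negation of Definition \ref{def:selectively:S}(iii) are fine; it is the choice of the index function that must dominate all of $M_x$, not merely avoid one set.
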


\begin{proof}
Suppose that $X$ is not \sel. 
By Definition \ref{def:selectively:S}(iii),
there is a family $\{U_n:n\in\N\}$ of non-empty open subsets of $X$ such that if 
$x_n\in U_n$ for every $n\in \N$, then the sequence $\{x_n:n\in \N\}$ 
does not satisfy $\mathcal{S}$ in $X.$ 

Suppose that there is a point $x\in X$ such that $M=\{n\in\N:x\in U_n\}$ is infinite.
For every $n\in\N\setminus M$,
choose an arbitrary point $x_n\in U_n.$ 
Define $s\in X^{\N} $ by 
\begin{equation}
s(n)=
\left\{\begin{array}{ll}
x& \mbox{if } n\in M ; \\
x_n & \mbox{otherwise}.\\
\end{array}
\right.
\end{equation}
Let $\varphi:\N\to M$ be an order preserving bijection. Since $s\circ\varphi$ is the constant sequence, $s\circ\varphi\in\mathcal{S}_X$  by (ii). By (i), $s$ satisfies $\mathcal{S}$ in $X.$ On the other hand, since $s(n)\in U_n$ for every $n\in\N,$ the sequence $s$ 
does not satisfies $\mathcal{S}$ in $X.$ 
This contradiction shows that
the set $M_x=\{n\in\N:x\in U_n\}$ is finite for every $x\in X$,
so we can
let $m(x)=\sup M_x+1$ if $M_x\not=\emptyset$ and $m(x)=1$ if $M_x=\emptyset.$ 

Define the strategy
$\SIGMA:\Seq(X)\to \mathcal{O}(X)$
for $\A$ in $\OP(X,\mathcal{S})$ by $\SIGMA(\emptyset)=U_1$ and
\begin{equation}
\label{eq:12:f}
\SIGMA(x_1,x_2,\dots,x_n)=
U_{m(x_n)}
\text{ for }
(x_1,x_2,\dots,x_n)\in\Seq(X)\setminus\{\emptyset\}.
\end{equation} 
It is clear from \eqref{eq:sigma:stationary} and \eqref{eq:12:f}
that $\SIGMA$ is stationary.  

To prove that $\SIGMA $ is a winning strategy for $\A$ in 
$\OP(X,\mathcal{S})$, assume 
that $\TAU$
is an arbitrary a strategy for $\B$ in $\OP(X,\mathcal{S})$. 
Let $w_{\SIGMA,\TAU}$ be the play produced by following strategies $\SIGMA$ and $\TAU$.
Since $x_1\in U_1$ and $x_{n}\in U_{m(x_{n-1})}$ for every $n\geq 2$
by \eqref{eq:rule:of:the:game} and \eqref{eq:7z}, the function $\varphi:\N\to\N$ given by $\varphi(1)=1$ and 
$ \varphi(n)=m(x_{n-1})$ for every $n\geq 2,$ is strictly increasing. For every $n\in\N\setminus \varphi(\N)$
choose an arbitrary point $y_n\in U_n.$ (This can be done as $U_n$ is non-empty.) Define $s\in X^\N$ by 
\begin{equation}
s(n)=
\left\{\begin{array}{ll}
x_{\varphi^{-1}(n)} & \mbox{if } n\in\varphi(\N) ; \\
y_n & \mbox{otherwise}.\\
\end{array}
\right.
\end{equation}
Since $s(n)\in U_n$ for every $n\in \N, $
$s\not\in\mathcal{S}_X.$ By hypothesis, $s\circ\varphi\not\in\mathcal{S}_X.$
 Therefore,
the sequence $\{x_n:n\in \N\}=s\circ\varphi$ does not satisfy $\mathcal{S}$ in $X$, so
$\A$ wins the play $w_{\SIGMA ,\TAU}$. Since $\TAU $ was an arbitrary strategy for $\B$ in $\OP(X,\mathcal{S})$, $\SIGMA $ is a winning strategy for $\A$ in $\OP(X,\mathcal{S})$. 
\end{proof}

Diagram~3
describes a fine structure of the area represented by arrow (c) in Diagram~2 by 
collecting
the implications 
that hold 
for every topological property of sequences $\mathcal{S}$. Arrow $(c_4)$ holds for those properties $\mathcal{S}$ that satisfy the hypothesis of Proposition  \ref{O:stationary:wins}.
(Note that all five properties from Example \ref{examples} satisfy this hypothesis.) 

The following proposition easily follows from Definitions \ref{comparing:strength:of:R:and:S} and \ref{def:game:OP(X,S)}(iii).

\begin{pro}
\label{winning:strategies:for:R:and:S}
If $\mathcal{R}$ and
$\mathcal{S}$ are 
 topological properties of sequences such that $\mathcal{R}$ is stronger than
$\mathcal{S}$, then:
\begin{itemize}
\item[(i)] every winning strategy for $\A$ in $\OP(X,\mathcal{S})$ is also a winning strategy for $\A$ in
$\OP(X,\mathcal{R})$;
\item[(ii)] every winning strategy for $\B$ in $\OP(X,\mathcal{R})$ is also a winning strategy for $\B$ in
$\OP(X,\mathcal{S})$.
\end{itemize}
\end{pro}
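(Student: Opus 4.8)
The plan is to unwind the definitions and observe that the only difference between the games $\OP(X,\mathcal{S})$ and $\OP(X,\mathcal{R})$ is the winning condition: in both games the legal plays $w=(U_1,x_1,U_2,x_2,\dots)$ are literally the same objects, since a play is just an infinite sequence with $U_n\in\mathcal{O}(X)$ and $x_n\in U_n$, a notion that does not refer to $\mathcal{S}$ or $\mathcal{R}$. Likewise, a strategy for $\A$ is any function $\SIGMA\colon\Seq(X)\to\mathcal{O}(X)$, a strategy for $\B$ is any function $\TAU\colon\Seq(\mathcal{O}(X))\setminus\{\emptyset\}\to X$ obeying \eqref{eq:rule:of:the:game}, and the play $w_{\SIGMA,\TAU}$ produced by a pair $(\SIGMA,\TAU)$ via \eqref{eq:U_1}--\eqref{eq:7z} is computed without any reference to the property of sequences. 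So $\SIGMA$ (resp.\ $\TAU$) is simultaneously a legal strategy for $\A$ (resp.\ $\B$) in both games, and it produces the same play $w_{\SIGMA,\TAU}$ in both.

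For part (i), I would fix a winning strategy $\SIGMA$ for $\A$ in $\OP(X,\mathcal{S})$ and an arbitrary strategy $\TAU$ for $\B$. Then $\A$ wins $w_{\SIGMA,\TAU}$ in $\OP(X,\mathcal{S})$, which by definition means the sequence $\{x_n:n\in\N\}$ does not satisfy $\mathcal{S}$ in $X$, i.e.\ $\{x_n:n\in\N\}\notin\mathcal{S}_X$. Since $\mathcal{R}$ is stronger than $\mathcal{S}$, Definition \ref{comparing:strength:of:R:and:S} gives $\mathcal{R}_X\subseteq\mathcal{S}_X$, hence $\{x_n:n\in\N\}\notin\mathcal{R}_X$, i.e.\ the sequence does not satisfy $\mathcal{R}$ in $X$, so $\A$ wins $w_{\SIGMA,\TAU}$ in $\OP(X,\mathcal{R})$ as well. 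As $\TAU$ was arbitrary, $\SIGMA$ is a winning strategy for $\A$ in $\OP(X,\mathcal{R})$. Part (ii) is the dual: fix a winning strategy $\TAU$ for $\B$ in $\OP(X,\mathcal{R})$ and an arbitrary strategy $\SIGMA$ for $\A$; then $\{x_n:n\in\N\}\in\mathcal{R}_X\subseteq\mathcal{S}_X$, so $\B$ wins $w_{\SIGMA,\TAU}$ in $\OP(X,\mathcal{S})$, and $\TAU$ is a winning strategy for $\B$ in $\OP(X,\mathcal{S})$.

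There is no real obstacle here; the statement is bookkeeping about definitions, and indeed the excerpt already flags it as ``easily follows.'' The only point that deserves an explicit sentence is the remark that the notions of play, strategy, and produced play are independent of the property of sequences, so that the containment $\mathcal{R}_X\subseteq\mathcal{S}_X$ can be applied directly to the single sequence $\{x_n:n\in\N\}$ arising from $w_{\SIGMA,\TAU}$. I would write the argument in three or four sentences, essentially as above.
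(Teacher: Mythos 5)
Your proposal is correct and is exactly the argument the paper intends: the paper omits the proof entirely, stating only that the proposition ``easily follows'' from Definitions \ref{comparing:strength:of:R:and:S} and \ref{def:game:OP(X,S)}(iii), and your unwinding of the definitions --- same plays, same strategies, same produced play $w_{\SIGMA,\TAU}$, with only the winning condition changed via $\mathcal{R}_X\subseteq\mathcal{S}_X$ --- is precisely that routine verification.
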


{\small
\begin{center} 
\medskip\hspace{1em}
\xymatrix{
X \text{ has a dense subspace that relatively satisfies } \mathcal{S}\ar[d] \text{ in }X\\
\text{Player } 
\B \text{ has a stationary winning strategy in } \OP(X,\mathcal{S})\ar[d]_{(c_1)}\ar[u]\\
\text{Player } 
\B \text{ has a winning strategy in } \OP(X,\mathcal{S})\ar[d]_{(c_2)}\\
\text{Player } 
\A \text{ does not have a winning strategy in } \OP(X,\mathcal{S})\ar[d]_{(c_3)}\\  
\text{Player } 
\A \text{ does not have a stationary winning strategy in } \OP(X,\mathcal{S})\ar[d]_{(c_4)}\\  
X \text{ is \sel}
}
\end{center}
\smallskip
\begin{center}
Diagram 3.
\end{center}
}

\section{Special cases $\Ssp(X)$
and $\Sp(X)$ of the open-point game $\OP(X,\mathcal{S})$}
\label{sec:5}

Two special cases of the game $\OP(X,\mathcal{S})$ 
play a prominent role in this paper.

\begin{definition}
\label{def:Ssp:Sp}
(i) When the topological property of sequences $\mathcal{S}$
is defined as 
in item (i) of Example \ref{examples}, 
we shall denote the game $\OP(X,\mathcal{S})$ simply by $\Ssp(X)$
and call it
the {\em selectively sequentially pseudocompact game on $X$\/}.

(ii)
When the topological property of sequences $\mathcal{S}$
is defined as 
in item (v) of Example \ref{examples}, 
we shall denote the game $\OP(X,\mathcal{S})$ simply by $\Sp(X)$
and call it the 
{\em selectively pseudocompact game on $X$\/}.
\end{definition}

The abbreviations and terms in Definition 
\ref{def:Ssp:Sp} are selected 
in such a way that
to remind the reader that \sel\ spaces in 
items (i) and (ii)  
are precisely the selectively sequentially pseudocompact 
and 
selectively pseudocompact spaces, respectively; see items (i) and (iii) 
of
Remark \ref{names:remark}. 

The next theorem gives an internal characterization of  spaces $X$ such that Player $\B$ has a stationary winning strategy in the games $\Ssp(X)$ and $\Sp(X)$,
respectively.

\begin{theorem}
\label{two:dense:corollaries}
Let $X$ be a topological space.
\begin{itemize}
\item[(i)] Player $\B$ has a stationary winning strategy in $\Ssp(X)$
if and only if $X$ has a dense subspace $D$ which is relatively sequentially compact in $X$; that is, every sequence of points of $D$ has a subsequence which converges to some point of $X$.
\item[(ii)] Player $\B$ has a stationary winning strategy in $\Sp(X)$
if and only if $X$ has a dense subspace $D$ which is relatively countably compact in $X$; that is, every sequence of points of $D$ has an accumulation point in $X$.
\end{itemize}
\end{theorem}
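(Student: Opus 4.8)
The plan is to derive Theorem~\ref{two:dense:corollaries} directly from the general Theorem~\ref{stationary_winning_strategy}, since the games $\Ssp(X)$ and $\Sp(X)$ are by Definition~\ref{def:Ssp:Sp} nothing but the game $\OP(X,\mathcal{S})$ for the particular properties $\mathcal{S}$ given in items~(i) and~(v) of Example~\ref{examples}. Thus the only thing to verify is that, for each of these two properties, the notion ``$D$ relatively satisfies $\mathcal{S}$ in $X$'' in the sense of Definition~\ref{def:selectively:S}(ii) coincides with the concrete notion stated in the theorem.

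For part~(i), I would take $\mathcal{S}$ as in Example~\ref{examples}(i), so that a sequence $s\in X^\N$ satisfies $\mathcal{S}$ in $X$ precisely when $s$ has a subsequence converging to some point of $X$. By Definition~\ref{def:selectively:S}(ii), a dense subspace $D$ relatively satisfies $\mathcal{S}$ in $X$ iff $D^\N\subseteq\mathcal{S}_X$, i.e.\ iff every sequence of points of $D$ has a subsequence converging to some point of $X$ --- which is exactly the stated condition that $D$ be relatively sequentially compact in $X$. Applying Theorem~\ref{stationary_winning_strategy} to this $\mathcal{S}$ then gives that Player $\B$ has a stationary winning strategy in $\OP(X,\mathcal{S})=\Ssp(X)$ iff $X$ has such a dense subspace $D$. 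Part~(ii) is entirely analogous with $\mathcal{S}$ as in Example~\ref{examples}(v): there $s$ satisfies $\mathcal{S}$ in $X$ iff $s$ has an accumulation point in $X$, so $D$ relatively satisfies $\mathcal{S}$ in $X$ iff every sequence of points of $D$ has an accumulation point in $X$, i.e.\ iff $D$ is relatively countably compact in $X$; again Theorem~\ref{stationary_winning_strategy} finishes the proof.

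There is essentially no obstacle here --- the theorem is a direct specialization, and the only ``work'' is the routine unwinding of Definition~\ref{def:selectively:S}(ii) for the two sample properties, together with matching it against the definitions of relative sequential compactness and relative countable compactness (the latter being exactly the phrasing already used in Remark~\ref{names:remark}). If I wanted to be careful about anything, it would be to note explicitly that the properties in Example~\ref{examples}(i) and~(v) are genuine topological properties of sequences, i.e.\ that they satisfy the monotonicity condition~\eqref{eq:extensions}; but this is immediate, since a subsequence of $s$ that converges in $Y$ (resp.\ an accumulation point of $s$ in $Y$) remains convergent (resp.\ an accumulation point) in the larger space $X$. Hence the whole argument is just: quote Theorem~\ref{stationary_winning_strategy}, substitute the relevant $\mathcal{S}$, and translate terminology.

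\begin{proof}[Proof of Theorem~\ref{two:dense:corollaries}]
(i) Let $\mathcal{S}$ be the topological property of sequences defined in item~(i) of Example~\ref{examples}; then $\OP(X,\mathcal{S})=\Ssp(X)$ by Definition~\ref{def:Ssp:Sp}(i). For a sequence $s\in X^\N$, the statement ``$s$ satisfies $\mathcal{S}$ in $X$'' means, by definition of $\mathcal{S}$, that $s$ has a subsequence converging to some point of $X$. Therefore, by Definition~\ref{def:selectively:S}(ii), a dense subspace $D$ of $X$ relatively satisfies $\mathcal{S}$ in $X$ if and only if every sequence of points of $D$ has a subsequence converging to some point of $X$, i.e.\ if and only if $D$ is relatively sequentially compact in $X$. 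Now Theorem~\ref{stationary_winning_strategy}, applied to this $\mathcal{S}$, yields that Player $\B$ has a stationary winning strategy in $\Ssp(X)$ if and only if $X$ has a dense subspace $D$ which is relatively sequentially compact in $X$.

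(ii) Let $\mathcal{S}$ be the topological property of sequences defined in item~(v) of Example~\ref{examples}; then $\OP(X,\mathcal{S})=\Sp(X)$ by Definition~\ref{def:Ssp:Sp}(ii). Here ``$s$ satisfies $\mathcal{S}$ in $X$'' means that $s$ has an accumulation point in $X$, so by Definition~\ref{def:selectively:S}(ii) a dense subspace $D$ of $X$ relatively satisfies $\mathcal{S}$ in $X$ if and only if every sequence of points of $D$ has an accumulation point in $X$, i.e.\ if and only if $D$ is relatively countably compact in $X$. Applying Theorem~\ref{stationary_winning_strategy} to this $\mathcal{S}$, we conclude that Player $\B$ has a stationary winning strategy in $\Sp(X)$ if and only if $X$ has a dense subspace $D$ which is relatively countably compact in $X$.
\end{proof}
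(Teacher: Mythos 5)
Your proposal is correct and is essentially the same as the paper's own proof: the authors also derive both parts by applying Theorem~\ref{stationary_winning_strategy} to the properties from items~(i) and~(v) of Example~\ref{examples}, citing Remark~\ref{names:remark}(i) and~(iii) for the terminology translation that you carry out explicitly. Nothing is missing.
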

\begin{proof}
Item (i) follows from Remark \ref{names:remark}(i) 
and Theorem \ref{stationary_winning_strategy}
applied to the property $\mathcal{S}$ from item (i) of 
Example \ref{examples},  
and item (ii)
follows from Remark \ref{names:remark}(iii) 
and Theorem \ref{stationary_winning_strategy}
applied to the property $\mathcal{S}$ from item (v) of 
Example \ref{examples}.
\end{proof}

Since every dyadic space has a dense sequentially compact subspace,
from Theorem \ref{stationary_winning_strategy}(i) we obtain the following
corollary strengthening \cite[Corollary 4.6]{DoS}.
\begin{cor}
\label{dyadic:result}
For every dyadic space $X$ (in particular, for every compact group $X$), Player $\B$ has a stationary winning strategy in the \ssp\ game $\Ssp(X)$ on $X$.
\end{cor}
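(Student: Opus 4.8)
The plan is to reduce Corollary \ref{dyadic:result} to Theorem \ref{two:dense:corollaries}(i), so that the only substantive work is a purely topological fact about dyadic spaces, not about games. By Theorem \ref{two:dense:corollaries}(i), Player $\B$ has a stationary winning strategy in $\Ssp(X)$ precisely when $X$ has a dense subspace $D$ that is relatively sequentially compact in $X$ (every sequence from $D$ has a subsequence converging to a point of $X$). So the goal collapses to: \emph{every dyadic space contains a dense sequentially compact subspace}. Note that a sequentially compact subspace $D$ is automatically relatively sequentially compact in any ambient $X$, so this is more than enough.

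First I would recall the definition: $X$ is dyadic if it is a continuous image of some Cantor cube $\{0,1\}^\kappa$. Fix a continuous surjection $f\colon\{0,1\}^\kappa\to X$. The natural candidate for a dense sequentially compact subspace of $X$ is the image $f[\Sigma]$ of the $\Sigma$-product $\Sigma=\{x\in\{0,1\}^\kappa : |\{\alpha:x(\alpha)\neq 0\}|\le\omega\}$ centered at the zero point (or at any fixed point). The two things to check are: (1) $\Sigma$ is dense in $\{0,1\}^\kappa$, hence $f[\Sigma]$ is dense in $X$ since $f$ is a continuous surjection and therefore maps dense sets to dense sets; and (2) $\Sigma$ is sequentially compact, hence so is its continuous image $f[\Sigma]$, because sequential compactness is preserved by continuous maps. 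For (2), given a sequence $(x_n)_{n\in\N}$ in $\Sigma$, the union $S=\bigcup_n\{\alpha:x_n(\alpha)\neq 0\}$ is countable, so the sequence lives in the subcube $\{0,1\}^S\times\{0\}^{\kappa\setminus S}$, which is homeomorphic to the metrizable compact space $\{0,1\}^S$ (as $|S|\le\omega$); hence it has a convergent subsequence in that subcube, and the limit again lies in $\Sigma$. This shows $\Sigma$, and therefore $f[\Sigma]$, is sequentially compact.

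Having established that $X$ has a dense sequentially compact subspace $D=f[\Sigma]$, I would invoke Theorem \ref{two:dense:corollaries}(i): since $D$ is sequentially compact it is in particular relatively sequentially compact in $X$ (every sequence from $D$ has a subsequence converging to a point of $D\subseteq X$), so Player $\B$ has a stationary winning strategy in $\Ssp(X)$. For the parenthetical assertion about compact groups, I would cite the classical theorem of Kuz'minov that every compact topological group is dyadic, so the group case is an immediate instance of the dyadic case. Finally, to record that this strengthens \cite[Corollary 4.6]{DoS}, the point is simply that we obtain a \emph{stationary} winning strategy rather than merely a winning strategy, or rather than just selective sequential pseudocompactness; no further argument is needed beyond the implications already displayed in Diagram~3.

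The main obstacle is essentially bookkeeping rather than depth: one must be slightly careful that the notion of "relatively sequentially compact in $X$" in Theorem \ref{two:dense:corollaries}(i) is correctly matched — but since a sequentially compact subspace is trivially relatively sequentially compact in any superspace, there is nothing delicate here. The only genuine input is the standard fact that $\Sigma$-products of (second countable, or here two-point) spaces are sequentially compact and dense in the full product, together with Kuz'minov's theorem for the group case; both are classical and can be cited. Thus the proof is short, and the write-up mostly consists of pointing to Theorem \ref{two:dense:corollaries}(i) and the density/sequential-compactness of $f[\Sigma]$.
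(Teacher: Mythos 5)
Your proposal is correct and follows essentially the same route as the paper: the paper derives the corollary in one line from the fact that every dyadic space has a dense sequentially compact subspace together with the characterization of stationary winning strategies for Player $\B$ (Theorem \ref{two:dense:corollaries}(i)). You merely supply the standard $\Sigma$-product argument and Kuz'minov's theorem, which the paper takes as known.
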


Let $\mathcal{S}$ be a topological property of sequences which is weaker than  item (i) but stronger than 
item (v) of Example \ref{examples}.
Recalling Remark \ref{decreasing:strength} and using our new notations, we obtain Diagram~4. 

\begin{sidewaysfigure}
{\tiny
\begin{center} \medskip
\vskip500pt
\xymatrix{
X \text{ is \seqc}\ar[d]_{1}\ar[r]&
X \text{ satisfies } \mathcal{S}\ar[d]_{7}\ar[r]& 
X \text{ is \cc} \ar[d]_{13}\\
X \text{ has a dense \seqc\ subspace}\ar[r]\ar[d]_{2}&
X \text{ has a dense subspace } Y \text{ that satisfies } \mathcal{S}\ar[d]_{8}\ar[r]& 
X \text{ has a dense \cc\ subspace}\ar[d]_{14}\\
X \text{ has a dense relatively \seqc\ subspace}\ar[d]\ar[r]& 
X \text{ has a dense subspace } Y \text{ that relatively satisfies } \mathcal{S}\ar[d]\ar[r]& 
X \text{ has a dense relatively \cc\ subspace}\ar[d]\\
\B \text{ has a stationary winning strategy in } \Ssp(X)\ar[d]_{3}\ar[u]\ar[r]&
\B \text{ has a stationary winning strategy in } \OP(X,\mathcal{S})\ar[d]_{9}\ar[u]\ar[r]&
\B \text{ has a stationary winning strategy in } \Sp(X)\ar[d]_{15}\ar[u]\\
\B \text{ has a winning strategy in } \Ssp(X)\ar[d]_{4}\ar[r]&
\B \text{ has a winning strategy in } \OP(X,\mathcal{S})\ar[d]_{10}\ar[r]&
\B \text{ has a winning strategy in } \Sp(X)\ar[d]_{16}\\
\A \text{ does not have a winning strategy in } \Ssp(X)\ar[d]_{5}\ar[r]&
\A \text{ does not have a winning strategy in } \OP(X,\mathcal{S})\ar[d]_{11}\ar[r]&
\A \text{ does not have a winning strategy in } \Sp(X)\ar[d]_{17}\\
\A \text{ does not have a stationary winning strategy in } \Ssp(X)\ar[d]_{6}\ar[r]&
\A \text{ does not have a stationary winning strategy in } \OP(X,\mathcal{S})\ar[d]_{12}\ar[r]&
\A \text{ does not have a stationary winning strategy in } \Sp(X)\ar[d]_{18}\\    
X \text{ is \ssp}\ar[r]& 
X \text{ is \sel}\ar[r]& 
X \text{ is \sp}\ar[d]_{19}\\
& & X \text{ is pseudocompact}.
}

Diagram 4.
\end{center}
}
\end{sidewaysfigure}

The Stone-\v{C}ech compactification of the natural numbers
is a compact space which is not
\ssp\ \cite[Example 2.5]{DoS}. Hence, none of the properties in the right column of Diagram 4 imply any of the properties in the middle column of the diagram, 
and none of the properties in the middle column of Diagram 4
imply any of the properties in the left column  
of this diagram. 

Example \ref{plank} shows that 
arrows 1, 7 and 13 of Diagram 4 are not reversible. Example \ref{mrowka} shows that arrows 2, 8 and 14 are not reversible. 
Corollary \ref{cor:5.11} shows that arrows 3, 9 and 15 are not reversible.
Corollary \ref{not:arrow:c4} shows that arrows 6 and 18 are not reversible, and arrow 12 is not reversible under additional assumption that the property $\mathcal{S}$ is closed under projections. The non-reversibility of arrow 19 is established
in \cite{GO,GT}.
The reversibility of the remaining numbered arrows in Diagram 4 remains unclear; see Questions~\ref{que:7.1} and~\ref{question:new}.

\section{Producing stationary winning strategies for Player $\A$ in $\OP(X,\mathcal{S})$ from non-stationary ones}

For a set $Y$, we use $\alpha(Y_{\disc})$ to denote the one point compactification of $Y_{\disc},$ where 
$Y_{\disc}$ is the set $Y$ endowed with the discrete topology. 

In the following theorem, we describe a general technique which employs a winning strategy for Player $\A$ in $\OP(X,\mathcal{S})$ to produce a {\em stationary\/} winning strategy 
for Player $\A$ in the game $\OP(X\times \alpha(\Seq(X)_{\disc}),\mathcal{S})$ played on the product of $X$ with the one point compactification of the discrete space $\Seq(X)_{\disc}$.

\begin{theorem}\label{from:winning:to:stationary}
Let $\mathcal{S}$ be a topological property of sequences preserved by projections.\footnote{This means that if a sequence
$\{(x_n,y_n):n\in\N\}$ of points of a product $X\times Y$ satisfies $\mathcal{S}$ in $X\times Y$, then the sequence $\{x_n:n\in\N\}$ must satisfy $\mathcal{S}$ in $X$.}
If $X$ is a space such that 
Player $\A$ has a winning strategy in $\OP(X,\mathcal{S})$, then 
Player $\A$ has a stationary winning 
strategy in $\OP(X\times \alpha(\Seq(X)_{\disc}),\mathcal{S}).$
\end{theorem}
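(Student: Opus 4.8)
The plan is to encode the full history of a play of $\OP(X,\mathcal{S})$ into the second coordinate, which lives in the one-point compactification $\alpha(\Seq(X)_{\disc})$. Write $Z=X\times\alpha(\Seq(X)_{\disc})$ and let $\SIGMA_0\colon\Seq(X)\to\mathcal{O}(X)$ be a (non-stationary) winning strategy for Player $\A$ in $\OP(X,\mathcal{S})$. The key observation is that a point $z=(x,t)$ of $Z$ with $t\in\Seq(X)$ carries enough information to reconstruct what $\A$ should play next: namely, $\A$ should respond with the open set $\SIGMA_0(t^\frown x)$ in the first coordinate, times the whole compactification in the second. So I would define a stationary strategy $\SIGMA\colon\Seq(Z)\to\mathcal{O}(Z)$ for $\A$ in $\OP(Z,\mathcal{S})$ by first fixing $\SIGMA(\emptyset)=\SIGMA_0(\emptyset)\times\alpha(\Seq(X)_{\disc})$, and then, for a nonempty history ending in $z_n=(x_n,t_n)\in Z$, setting
\begin{equation}
\SIGMA(z_1,\dots,z_n)=
\begin{cases}
\SIGMA_0(t_n{}^\frown x_n)\times\alpha(\Seq(X)_{\disc}) & \text{if } t_n\in\Seq(X),\\
\SIGMA_0(\emptyset)\times\alpha(\Seq(X)_{\disc}) & \text{if } t_n=\infty,
\end{cases}
\end{equation}
where $\infty$ denotes the point at infinity of the compactification and $t_n{}^\frown x_n$ is the finite sequence obtained by appending $x_n$ to $t_n$. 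By construction $\SIGMA$ depends only on the last move of the opponent, so it is stationary in the sense of \eqref{eq:sigma:stationary}.

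The next step is to verify that $\SIGMA$ is winning. Let $\TAU$ be an arbitrary strategy for Player $\B$ in $\OP(Z,\mathcal{S})$ and let $w_{\SIGMA,\TAU}=(V_1,z_1,V_2,z_2,\dots)$ be the play it produces, where $z_n=(x_n,t_n)$. Here $\B$ chooses each $z_n\in V_n$ freely; in particular $\B$ is free to set the second coordinate $t_n$ to be anything in $\alpha(\Seq(X)_{\disc})$, since the second factor of every $V_n$ is the whole compactification. The heart of the argument is to show that the $X$-coordinate sequence $\{x_n:n\in\N\}$ is consistent with $\A$ playing $\SIGMA_0$ in $\OP(X,\mathcal{S})$ — but \emph{this requires that $\B$ cooperate by recording the history in the second coordinate}. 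So I would instead argue as follows: given $\TAU$, define an auxiliary strategy $\TAU'$ for $\B$ in $\OP(X,\mathcal{S})$ which simulates a play of $\OP(Z,\mathcal{S})$ in which $\B$ uses $\TAU$ but is \emph{forced} to keep the second coordinate equal to the running history; formally, when the simulated $X$-moves so far are $x_1,\dots,x_{n-1}$ and $\A$ offers $U_n$ in $\OP(X,\mathcal{S})$, let $\B$ (via $\TAU'$) pretend that in $\OP(Z,\mathcal{S})$ Player $\A$ offered $U_n\times\alpha(\Seq(X)_{\disc})$, let $\TAU$ respond with some $(x_n,t_n)$, and have $\TAU'$ return this $x_n\in U_n$. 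One checks by induction that the open sets $U_n$ produced this way are exactly $U_1=\SIGMA_0(\emptyset)$ and $U_n=\SIGMA_0(x_1,\dots,x_{n-1})$ — i.e. the simulated play of $\OP(X,\mathcal{S})$ is precisely $w_{\SIGMA_0,\TAU'}$ — provided the second coordinates $t_n$ chosen by $\TAU$ happen to equal $(x_1,\dots,x_{n-1})$. Since they need not, the simulation must instead be run the other way: $\A$'s stationary strategy $\SIGMA$ in $\OP(Z,\mathcal{S})$ reads whatever $\B$ put in the second coordinate, and the correct statement to prove is that, \emph{whatever} sequence $\{(x_n,t_n):n\in\N\}$ arises, the $X$-coordinates $\{x_n:n\in\N\}$ fail $\mathcal{S}$ in $X$, hence (since $\mathcal{S}$ is preserved by projections, by the footnote hypothesis) $\{(x_n,t_n):n\in\N\}$ fails $\mathcal{S}$ in $Z$ and $\A$ wins.

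To make the previous paragraph go through, the clean route is: given the play $w_{\SIGMA,\TAU}$ with $X$-coordinates $x_1,x_2,\dots$, build a single strategy $\TAU'$ for $\B$ in $\OP(X,\mathcal{S})$ by $\TAU'(U_1,\dots,U_n)=$ the $X$-coordinate of $\TAU$'s response when $\A$'s moves in $\OP(Z,\mathcal{S})$ are taken to be $U_1\times\alpha(\Seq(X)_{\disc}),\dots,U_n\times\alpha(\Seq(X)_{\disc})$ and $\B$ keeps the second coordinates equal to the true history of $X$-moves; then observe that $w_{\SIGMA_0,\TAU'}$ has exactly the same $X$-coordinates $x_1,x_2,\dots$, because along $w_{\SIGMA_0,\TAU'}$ the second coordinate fed to $\SIGMA$ at step $n$ is $(x_1,\dots,x_{n-1})\in\Seq(X)$, so $\SIGMA$ returns $\SIGMA_0(x_1,\dots,x_{n-1})\times\alpha(\Seq(X)_{\disc})$, matching $\SIGMA_0$'s move in $\OP(X,\mathcal{S})$. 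Since $\SIGMA_0$ is winning in $\OP(X,\mathcal{S})$, the sequence $\{x_n:n\in\N\}$ does not satisfy $\mathcal{S}$ in $X$; by the projection-preservation hypothesis on $\mathcal{S}$, the sequence $\{z_n:n\in\N\}=\{(x_n,t_n):n\in\N\}$ does not satisfy $\mathcal{S}$ in $Z$, so $\A$ wins $w_{\SIGMA,\TAU}$. As $\TAU$ was arbitrary, $\SIGMA$ is a stationary winning strategy for $\A$ in $\OP(Z,\mathcal{S})$.

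The main obstacle is exactly the bookkeeping in the simulation: one must check carefully that the stationary strategy $\SIGMA$, when confronted with an \emph{arbitrary} opponent $\TAU$ (which may put garbage, or the point at infinity, in the second coordinate), still forces a loss for $\B$. The resolution is that $\A$ does not rely on $\B$'s cooperation at all — rather, for \emph{any} outcome sequence $\{(x_n,t_n):n\in\N\}$ the $X$-coordinates are realized by some legitimate play against $\SIGMA_0$ (one simply takes $\TAU'$ to always record the true history, which is a valid choice for $\B$ in the auxiliary game), so $\SIGMA_0$'s winning property kills the $X$-coordinate sequence, and projection-preservation of $\mathcal{S}$ finishes it. The role of compactness of $\alpha(\Seq(X)_{\disc})$ is only to guarantee that the second factor — hence every $V_n$ — is a legitimate nonempty open set and that $Z$ is a space of the required kind; it plays no part in the winning argument itself.
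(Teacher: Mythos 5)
Your overall architecture is the right one — encode the history in the second coordinate, transfer the play back to an auxiliary strategy $\TAU'$ for $\B$ in $\OP(X,\mathcal{S})$, and finish with preservation of $\mathcal{S}$ under projections — but the definition of the stationary strategy has a gap that the rest of the argument does not repair. You set the second factor of $\A$'s open set equal to the \emph{whole} space $\alpha(\Seq(X)_{\disc})$, so, as you yourself note, $\B$ is free to put anything whatsoever into the second coordinate. Your resolution is the claim that ``for any outcome sequence $\{(x_n,t_n)\}$ the $X$-coordinates are realized by some legitimate play against $\SIGMA_0$.'' That claim is false. In the play against your $\SIGMA$, one has $x_{n+1}\in\SIGMA_0(t_n{}^\wedge x_n)$, whereas a legitimate play against $\SIGMA_0$ requires $x_{n+1}\in\SIGMA_0(x_1,\dots,x_n)$; if $t_n\neq(x_1,\dots,x_{n-1})$ these are different open sets, and nothing forces $x_{n+1}$ to lie in the latter. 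Concretely, if $\TAU$ always answers with second coordinate $t_n=\emptyset$, then $\A$'s first-coordinate moves are $\SIGMA_0(\emptyset),\SIGMA_0((x_1)),\SIGMA_0((x_2)),\dots$, i.e.\ $\A$ has been demoted to the stationary strategy $x\mapsto\SIGMA_0((x))$ on $X$ — which need not be winning; indeed the whole point of the theorem is that winning strategies need not stationarize on $X$ itself. A player cannot be ``forced to cooperate'' by fiat: $\TAU$ is given, and you must beat it as it is.

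The fix is exactly the point your last paragraph dismisses: the strategy must return $\SIGMA_0(t_n{}^\wedge x_n)\times\{t_n{}^\wedge x_n\}$, with a \emph{singleton} in the second factor (and $\SIGMA_0(\emptyset)\times\{\emptyset\}$ initially). Since $\Seq(X)_{\disc}$ is discrete, these singletons are open in $\alpha(\Seq(X)_{\disc})$, so this is a legal move; and it leaves $\B$ no choice but to set $t_{n+1}=t_n{}^\wedge x_n$, whence an easy induction gives $t_n=(x_1,\dots,x_{n-1})$ for all $n$ and $x_n\in\SIGMA_0(x_1,\dots,x_{n-1})$. This is precisely the paper's Claim~\ref{first:claim}, after which your transfer to $\TAU'$ and the projection argument go through verbatim. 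Note also that the role of $\alpha(\Seq(X)_{\disc})$ is not merely to make the second factor ``a legitimate nonempty open set'': what matters is that points of $\Seq(X)$ are isolated there, which is what lets $\A$ pin the history down.
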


\begin{proof}
Let $Y=\alpha(\Seq(X)_{\disc}).$ 
For $n\in\N$, $\sigma=(x_1,x_2,\dots,x_n)\in \Seq(X)$ and $x\in X$, we use $\sigma^\wedge x$ to denote the sequence
$(x_1,x_2,\dots,x_n,x)\in\Seq(X)$ of length $n+1$. For the empty sequence $\emptyset\in \Seq(X)$ and $x\in X$, we let $\emptyset^\wedge x$ to be the sequence $(x)\in \Seq(X)$ of length $1$.

Let $\SIGMA$ be a winning strategy for Player $\A$ in $\OP(X,\mathcal{S})$. Define the map $\SIGMA':\Seq(X\times Y)
\to \mathcal{O}(X\times Y)$ by 
\begin{equation}
\label{eq:14:new}
\SIGMA'(\emptyset)= \SIGMA(\emptyset) \times\{\emptyset\}
\end{equation}
  and 
\begin{equation}
\label{eq:14:g}
\SIGMA'((x_1,y_1),(x_2,y_2),\ldots,(x_n,y_n))=\left\{\begin{array}{ll}
\SIGMA(y_n{}^\wedge x_n) \times \{ y_n{}^\wedge x_n\}& \mbox{if } y_n\in\Seq(X); \\
\SIGMA(\emptyset) \times \{\emptyset\}& \mbox{otherwise}\\
\end{array}
\right. 
\end{equation}
for $((x_1,y_1),(x_2,y_2),\ldots,(x_n,y_n))\in\Seq(X\times Y)\setminus\{\emptyset\}$.
If $x\in X$ and
$y\in \Seq(X)$, 
then $y^\wedge x\in\Seq(X)$, so the singleton $\{y^\wedge x\}$ is an open subset of $Y$.
Similarly, $\emptyset\in\Seq(X)$, so the singleton $\{\emptyset\}$ is an open subset of $Y$ as well.
This shows that the map $\SIGMA'$ is well defined.
By Definition~\ref{def:game:OP(X,S)}(i), $\SIGMA'$ is a strategy 
for Player $\A$ in the game $\OP(X\times Y,\mathcal{S})$.
By \eqref{eq:tau:stationary} and \eqref{eq:14:g}, the strategy $\SIGMA'$ is stationary. 

We are going to show that $\SIGMA'$ is a winning strategy for Player $\A$ in the game $\OP(X\times Y,\mathcal{S})$.
By Definition \ref{def:game:OP(X,S)}(iii), to show this we need to consider an arbitrary strategy 
$\TAU'$ for Player $\B$ in $\OP(X\times Y,\mathcal{S})$
and show that Player $\A$ wins the game
\begin{equation}
\label{eq:15:l}
w_{\SIGMA',\TAU'}=(W_1,(x_1,y_1),W_2,(x_2,y_2),\ldots,W_n,(x_n,y_n),\ldots)
\end{equation}
produced by following the strategies $\SIGMA'$ and $\TAU'$. 

\begin{claim}
\label{first:claim}
\begin{itemize}
\item[(i)]
$x_n\in\SIGMA(y_n)$ for every $n\in\N$.
\item[(ii)]
$y_n=(x_1,x_2,\dots,x_{n-1})\in \Seq(X)$ for every $n\in\N$,
where we consider $(x_1,x_2,\dots,x_{n-1})$ to be the empty sequence $\emptyset$ for $n=1$.
\end{itemize}
\end{claim}
\begin{proof}
We prove this claim by induction on $n\in\N$.

\smallskip
{\em Basis of induction\/}.
Note that
$W_1=\SIGMA'(\emptyset)$ 
and
$(x_1,y_1)=\TAU'(W_1)$
by \eqref{eq:15:l} and 
Definition \ref{def:game:OP(X,S)}(ii).
Combining the first equation with \eqref{eq:14:new}, we get 
$W_1=\SIGMA(\emptyset) \times\{\emptyset\}$.
Similarly,
$(x_1,y_1)=\TAU'(W_1)\in W_1$
by \eqref{eq:15:l} and 
Definition \ref{def:game:OP(X,S)}(i).
Therefore,
$x_1\in\SIGMA(\emptyset)$
and $y_1=\emptyset$,
which implies
$x_1\in \SIGMA(y_1).$ 

\smallskip
{\em Inductive step\/}.
Let $n\in\N$ and $n\ge 2$. Since $\SIGMA'$ is stationary, 
\begin{equation}
\label{eq:17:s}
W_{n}=\SIGMA'(x_{n-1},y_{n-1})=\SIGMA(y_{n-1}{}^\wedge x_{n-1}) \times \{ y_{n-1}{}^\wedge x_{n-1}\}
\end{equation}
by \eqref{eq:14:g}, \eqref{eq:15:l} and 
Definition \ref{def:game:OP(X,S)}(ii).
Since 
$(x_n,y_n)=\TAU'(W_1,W_2,\ldots,W_{n})\in W_n$
by \eqref{eq:15:l} and 
Definition \ref{def:game:OP(X,S)}(i),
combining this with \eqref{eq:17:s} yields
$y_n=y_{n-1}{}^\wedge x_{n-1}$
and 
$x_n\in \SIGMA(y_{n-1}{}^\wedge x_{n-1})$.
Since $y_{n-1}=(x_1,x_2,\dots,x_{n-2})$ by our inductive assumption,
we get $y_n=(x_1,x_2,\dots,x_{n-2})^\wedge x_{n-1}=
(x_1,x_2,\dots,x_{n-1})\in\Seq(X)$ and 
$x_n\in \SIGMA(y_n)$.
\end{proof}

For every $V\in\mathcal{O}(X)$, select $a_V\in V$ arbitrarily.
Define $\TAU:\Seq(\mathcal{O}(X))\setminus\{\emptyset\}\to X$ by
\begin{equation}
\label{eq:15:g}
\TAU(V_1,\ldots,V_n)=\left\{\begin{array}{ll}
x_n& \mbox{if } V_n=\SIGMA(y_n); \\
a_{{}_{V_n}}& \mbox{otherwise}\\
\end{array}
\right. 
\hskip30pt
\text{for }(V_1,V_2,\dots,V_n)\in\Seq(\mathcal{O})\setminus\{\emptyset\}.
\end{equation}

It follows from Claim \ref{first:claim}(i) that
 $\TAU$ is a well-defined strategy for 
Player $\B$ in $\Sp(X,\mathcal{S})$.
Let
\begin{equation}
\label{eq:16:g}
w_{\SIGMA,\TAU}=(U_1,z_1,U_2,z_2,\dots,U_n,z_n,\dots)
\end{equation}
be the game produced by following the strategies $\SIGMA$ and $\TAU$. 

\begin{claim}
\label{second:claim}
$U_n=\SIGMA(y_n)$ and $z_n=x_n$ for all $n\in\N$. 
\end{claim}
\begin{proof}
We prove this claim by induction on $n\in\N$.

\smallskip
{\em Basis of induction\/}.
Recall that $y_1=\emptyset$ by Claim \ref{first:claim}(ii).
It follows from \eqref{eq:16:g} and 
Definition \ref{def:game:OP(X,S)}(ii) that
$U_1=\SIGMA(\emptyset)=\SIGMA(y_1)$.
Now $z_1=\TAU(U_1)=\TAU(\SIGMA(y_1))=x_1$
by Definition \ref{def:game:OP(X,S)}(ii) and \eqref{eq:15:g}.

\smallskip
{\em Inductive step\/}.
Let $n\in\N$ and $n\ge 2$.
Suppose that 
$U_i=\SIGMA(y_i)$ and $x_i=z_i$ for every $i< n.$
Then
$U_{n}=\SIGMA(z_1,z_2,\ldots,z_{n-1})
=
\SIGMA(x_1,x_2,\ldots,x_{n-1})
=
\SIGMA(y_n)
$ 
by \eqref{eq:16:g},
Definition \ref{def:game:OP(X,S)}(ii), 
our inductive assumption and Claim \ref{first:claim}(ii).
Therefore, $z_{n}=\TAU(U_1,U_2,\ldots,U_{n})=x_n$
by
Definition \ref{def:game:OP(X,S)}(ii)
and
\eqref{eq:15:g}. 
\end{proof}

By Claim \ref{second:claim} and  (\ref{eq:16:g}), we get
$w_{\SIGMA,\TAU}=(U_1,x_1,U_2,x_2,\dots,U_n,x_n,\dots)$.
Since $\SIGMA$ is a winning strategy for Player $\A$ in $\OP(X,\mathcal{S})$ and $\TAU$ is a strategy for Player $\B$ in $\OP(X,\mathcal{S})$,
Player $\A$
wins the game $w_{\SIGMA,\TAU}$. This means that
the sequence
$\{x_n:n\in \N\}$ does not satisfy $\mathcal{S}$ in $X.$ Since $\mathcal{S}$ is preserved by projections, 
the sequence  $\{(x_n,y_n):n\in \N\}$ does not satisfy $\mathcal{S}$ in $X\times Y.$ Therefore, Player $\A$ wins the 
game \eqref{eq:15:l}.
\end{proof}

\section{``Injective version'' of a van Douwen MAD family on an arbitrary set} 

\label{MAD:family}

In our construction of an example in the next section, we shall need the following set-theoretic result of independent interest. When $D=\N$, this result becomes an ``injective version'' of a van Douwen MAD  family 
constructed by D. Raghavan in \cite[Theorem 2.14]{R}.

\begin{theorem}
\label{set-theoretic:result}
Let $D$ be an infinite set and let $\mathscr{I}(D)$ denote the family of all injective functions $g$ from a countably infinite subset $\dom (g)$ of $\N$ to $D$.
Then 
there exists a 
family $\mathscr{F}\subseteq \mathscr{I}(D)$ having two properties:
\begin{itemize}
\item[(A)] If $f,g\in\mathscr{F}$ are distinct, then the set $\{n\in\dom (f)\cap \dom (g): f(n)=g(n)\}$ is finite.
\item[(B)] For every $g\in\mathscr{I}(D)$, there exists $f\in\mathscr{F}$ such that $\{n\in\dom (f)\cap \dom (g): f(n)=g(n)\}$ is infinite.
\end{itemize}
\end{theorem}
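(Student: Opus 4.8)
The plan is to build $\mathscr{F}$ by a transfinite recursion of length $\mathfrak{c}=2^{\aleph_0}$, enumerating $\mathscr{I}(D)$ as $\{g_\alpha:\alpha<\mathfrak{c}\}$ and at stage $\alpha$ adding (at most) one new function $f_\alpha\in\mathscr{I}(D)$ that ``diagonalizes against'' $g_\alpha$ in the sense of (B), while preserving the almost-disjointness condition (A). The key observation that makes this tractable is that $\mathscr{I}(D)$ can be identified with a subset of $(\N\times D)^{\N}$, or more conveniently that an injective partial function $N\rightharpoonup D$ with infinite domain is the same thing as an infinite subset of the ``grid'' $\N\times D$ that meets every vertical line $\{n\}\times D$ and every horizontal line $\N\times\{d\}$ in at most one point. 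Under this identification, ``$\{n\in\dom(f)\cap\dom(g):f(n)=g(n)\}$ infinite'' becomes ``$f\cap g$ is infinite'' as subsets of $\N\times D$, so condition (A) says $\mathscr{F}$ is an almost disjoint family of such grid-graphs, and (B) says it is maximal among such families. Thus the theorem is exactly the assertion that a van Douwen--style MAD family exists in the ``injective'' setting, and I would mirror Raghavan's argument \cite[Theorem 2.14]{R} in this new index set.

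Concretely, I would proceed as follows. Fix a countable elementary submodel or, more elementarily, fix for the recursion a bookkeeping that guarantees every $g\in\mathscr{I}(D)$ is considered. At stage $\alpha<\mathfrak{c}$ we have an almost disjoint family $\mathscr{F}_\alpha=\{f_\beta:\beta<\alpha,\ \beta\in S\}$ (for some set $S\subseteq\alpha$ of ``successful'' stages) of size $<\mathfrak{c}$. Given $g=g_\alpha\in\mathscr{I}(D)$, there are two cases. If there already exists $f\in\mathscr{F}_\alpha$ with $|f\cap g|=\omega$, do nothing and set $\mathscr{F}_{\alpha+1}=\mathscr{F}_\alpha$. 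Otherwise, $g$ is almost disjoint from every member of $\mathscr{F}_\alpha$, and I must carve an injective partial function $f_\alpha\subseteq g$ (so automatically $f_\alpha\in\mathscr{I}(D)$ since a subset of an injective function is injective) with infinite domain such that $f_\alpha$ is almost disjoint from every $f_\beta\in\mathscr{F}_\alpha$; then $|f_\alpha\cap g|=\omega$ by construction, witnessing (B) for $g_\alpha$. Taking $f_\alpha$ to be an infinite subfunction of $g$ itself keeps the injectivity and the grid-graph structure for free; the content is getting almost disjointness from $<\mathfrak{c}$ previously chosen functions simultaneously. For this one uses the standard almost-disjoint-refinement machinery: since $\dom(g)$ is a countably infinite subset of $\N$ and the sets $\{n\in\dom(g):f_\beta(n)=g(n)\}$ are finite for every $\beta$, a diagonal/fusion argument across the $<\mathfrak{c}$ indices — exactly the point where Raghavan's construction uses a carefully chosen tree or a $\sigma$-directed system — yields an infinite $E\subseteq\dom(g)$ with $f_\alpha=g\restriction E$ working. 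At the end put $\mathscr{F}=\bigcup_{\alpha<\mathfrak{c}}\mathscr{F}_\alpha$; (A) holds because at every stage we only ever added functions almost disjoint from all earlier ones, and (B) holds because every $g\in\mathscr{I}(D)$ equals some $g_\alpha$ and was handled at stage $\alpha$.

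Two preliminary reductions deserve a remark. First, the case $|D|=\aleph_0$: then $\mathscr{I}(D)$ has size $\mathfrak{c}$, the recursion has length $\mathfrak{c}$, and this is literally the injective van Douwen MAD family of \cite[Theorem 2.14]{R} up to the identification above; I would cite that theorem (or its proof technique) rather than reprove it. Second, the case $|D|>\aleph_0$: here $|\mathscr{I}(D)|=|D|^{\aleph_0}$ may exceed $\mathfrak{c}$, so the recursion is longer, but nothing changes — at each stage we still face only the functions enumerated so far together with those already in $\mathscr{F}_\alpha$, and since $g_\alpha$ has a \emph{countable} domain contained in $\N$, the relevant combinatorics at each step still take place inside $\P(\N)$, so the same diagonalization applies verbatim. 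Alternatively, and perhaps cleanest for the write-up: reduce to $D=\N$ outright. Fix a countably infinite $D_0\subseteq D$ and a bijection $D_0\cong\N$; transport the family $\mathscr{F}_0\subseteq\mathscr{I}(D_0)$ obtained for $D=\N$ into $\mathscr{I}(D)$. Property (A) is inherited immediately. For (B), given arbitrary $g\in\mathscr{I}(D)$, the preimage $g^{-1}(D_0)$ is a subset of $\N$; if it is infinite, $g$ restricted to it is a member of $\mathscr{I}(D_0)$ and the chosen $f\in\mathscr{F}_0$ works. The remaining case — $g^{-1}(D_0)$ finite, i.e. $g$ is ``almost disjoint from $D_0$'' — cannot simply be ignored, so this reduction does not quite close the argument on its own; it only disposes of the $g$'s that live mostly in $D_0$. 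Hence I expect the genuinely necessary move is to run the full recursion on $\mathscr{I}(D)$ itself, and the step I anticipate as the main obstacle is precisely the simultaneous diagonalization at a limit-or-successor stage against $<|\mathscr{I}(D)|$-many already-placed functions — the place where one must reproduce the heart of Raghavan's van Douwen MAD construction, adapted so that the diagonalizing object is taken as a subfunction of the target $g_\alpha$ (which is what guarantees it stays in $\mathscr{I}(D)$, since arbitrary almost-disjoint perturbations need not remain injective functions into $D$). Everything else — the identification with grid-graphs, inheritance of (A), the bookkeeping — is routine.
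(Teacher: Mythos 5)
Your route is genuinely different from the paper's. The paper runs no recursion at all: it fixes Raghavan's van Douwen MAD family $\mathscr{G}\subseteq\N^{\N}$, for each $g\in\mathscr{G}$ a maximal almost disjoint family $\mathcal{A}_g$ of infinite sets on which $g$ is injective, and a maximal almost disjoint family $\mathscr{H}$ of injections from $\N$ to $D$ with pairwise almost disjoint ranges, and then sets $\mathscr{F}=\{h\circ g\restriction_A: g,h,A \text{ as above}\}$; property (B) is obtained by pulling an arbitrary $p\in\mathscr{I}(D)$ back through a suitable $h$ to an infinite partial function on $\N$ and invoking the van Douwen property of $\mathscr{G}$. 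Your closing observation about the attempted reduction to $D=\N$ is on target: a single countable $D_0\subseteq D$ cannot catch those $g$ whose range is almost disjoint from $D_0$, and this is exactly why the paper composes with the entire family $\mathscr{H}$, whose ranges meet every infinite subset of $D$, rather than with one injection.

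The step you single out as the main obstacle, however, is not an obstacle, and noticing this changes the complexity of the whole argument. In your second case you assume that $\{n\in\dom(f_\beta)\cap\dom(g_\alpha): f_\beta(n)=g_\alpha(n)\}$ is finite for \emph{every} $f_\beta\in\mathscr{F}_\alpha$; but then every infinite subfunction of $g_\alpha$ --- in particular $g_\alpha$ itself --- is automatically almost disjoint from every member of $\mathscr{F}_\alpha$, since its agreement set with $f_\beta$ is contained in that of $g_\alpha$. So you may simply take $f_\alpha=g_\alpha$: no diagonalization, fusion, or Raghavan-style tree argument is needed, and your recursion (equivalently, a single application of Zorn's lemma to the subfamilies of $\mathscr{I}(D)$ satisfying (A), followed by the standard maximality argument) already yields (A) and (B) exactly as stated. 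The reason Raghavan's theorem is deep, and the reason the paper's construction passes through it, is that for a genuine van Douwen MAD family the members are \emph{total} functions while maximality is tested against the strictly larger class of infinite \emph{partial} functions, so a witness to non-maximality cannot simply be adjoined to the family; in Theorem \ref{set-theoretic:result} the family and the test class are the same set $\mathscr{I}(D)$, which is closed under passing to infinite subfunctions, so that tension disappears. Your proposal is therefore correct once you delete the deferred appeal to the ``heart of Raghavan's construction''; what the paper's longer argument supplies beyond properties (A) and (B) is an explicit description of $\mathscr{F}$ in terms of $\mathscr{G}$ and $\mathscr{H}$, not anything those two properties by themselves require.
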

\begin{proof}
A family $\mathscr{G}\subseteq \N^{\N}$ is said to be \emph{almost disjoint\/} if 
the set $\{n\in \N:f(n)=g(n)\} $ is finite whenever $f,g\in \mathscr{G}$ are distinct.
Following \cite[Definition 1.3]{R},
we shall say that $p$ is an \emph{infinite partial function} 
if $p\in \N^P$ 
for some infinite subset $P$ of $\N$.
An almost disjoint family $\mathscr{G}\subseteq \N^{\N}$ is a 
\emph{van Douwen MAD family\/} if for every infinite partial function $p\in\N^P$, there is $g\in \mathscr{G}$ such that 
the set $\{n\in P:p(n)=g(n)\} $ is infinite \cite[Definition 1.4]{R}.

We fix a van Douwen MAD family
 $\mathscr{G}$ of size $\mathfrak{c}$;
the existence of such a family was proved by 
D. Raghavan
\cite[Theorem 2.14]{R}.
For every $g\in \mathscr{G}$,
let 
\begin{equation}
\label{eq:I:g}
\mathcal{I}_g=\{A\subseteq \N: A
\text{ is infinite and }
g\restriction_A
\text{ is an injection}\}.
\end{equation}
Define 
$$\mathscr{G}'=\{g\in\mathscr{F}: \mathcal{I}_g\not=\emptyset\}.
$$
For every $g\in\mathscr{G}'$, 
use Zorn's lemma to fix a 
maximal almost disjoint subfamily
$\mathcal{A}_g$ of $\mathcal{I}_g$; that is,
\begin{itemize}
\item[(a)] $A\cap A'$ is finite whenever $A,A'\in\mathcal{A}_g$ are distinct;
\item[(b)] if $T\in\mathcal{I}_g$, then $T\cap A$ is infinite for some $A\in\mathcal{A}_g$.
\end{itemize}
Clearly, $\mathcal{A}_g\not=\emptyset$.

Applying Zorn's lemma, we can 
fix a family $\mathscr{H}\subseteq D^\N$ having the following properties:
\begin{itemize}
\item[(i)] each $h\in\mathscr{H}$ is injective;
\item[(ii)] if $h_1,h_2\in\mathscr{H}$ and $h_1\not=h_2$, then the set 
$h_1(\N)\cap h_2(\N)$ is finite;
\item[(iii)] if $S$ is an infinite subset of $D$, then $S\cap h(\N)$ is infinite for some $h\in\mathscr{H}$.
\end{itemize}

If $g\in\mathscr{G}', h\in\mathscr{H}, A\in\mathcal{A}_g$, define $f_{g,h,A}=h\circ g\restriction_A$ 
and consider the family $$\mathscr{F}=\{f_{g,h,A}:g\in\mathscr{G}', h\in\mathscr{H}, A\in\mathcal{A}_g\}$$

\begin{claim}
\label{claim:injectiove:restriction}
$\mathscr{F}\subseteq \mathscr{I}(D)$.
\end{claim}
\begin{proof}
This follows from $A\in\mathcal{A}_g\subseteq \mathcal{I}_g$,
\eqref{eq:I:g} and item (i) of the definition of $\mathscr{H}.$ 
\end{proof}

\begin{claim}\label{almost:disjoint} 
If $f_1,f_2$ are different elements of $\mathscr{F}$, then the set $\{m\in \dom(f_1)\cap \dom(f_2):f_1(m)= f_2(m)\}$  is finite.
\end{claim}
\begin{proof}
Let $g_i\in\mathscr{G}', h_i\in\mathscr{H}, A_i\in\mathcal{A}_{g_i}$, 
such that $f_i=h_i\circ g_i\restriction_{A_i}$ for $i=1,2$.  

\smallskip
{\em Case 1\/}. {\sl $h_1\not =h_2$.\/} 
From item (ii) of the definition of $\mathscr{H}$, we conclude that
the set $h_1(\N)\cap h_2(\N)$ is finite.
Since $h_1$ is injective by item (i) of
the definition of $\mathscr{H}$, the 
subset $h_1^{-1}(h_1(\N)\cap h_2(\N))$ of $\N$ is finite, so 
$h_1^{-1}(h_1(\N)\cap h_2(\N))\subseteq n_0$ for some $n_0\in\N$.
Since $g_1\restriction_{A_1}$ is an injection, 
we can take $n_1\in \N $ such that $g_1(m)>n_0$ for every $m\in A_1$ with $m>n_1.$ 

Take $m\in\N$ such that $f_1(m)= f_2(m).$ Then $m\in $ dom$(f_1)=A_1$ and 
$h_1( g_1(m))=f_1(m)=f_2(m)= h_2(g_2(m)).$
Therefore $g_1(m)\in h_1^{-1}(h_1(\N)\cap h_2(\N))\subseteq n_0.$
Hence, $g_1(m)< n_0,$ since $m\in A_1, m\leq n_1.$ 
Then the set $\{m\in \N:f_1(m)\in f_2(\N)\}$ is finite. 

\smallskip
{\em Case 2\/}. 
{\sl $h_1=h_2=h$  and  $g_1\not = g_2.$\/}
Since $g_1,g_2\in \mathscr{G}'\subseteq \mathscr{G}$
are distinct and the family $\mathscr{G}$ is almost disjoint,
the set $\{k\in\N: g_1(k)=g_2(k)\}$ is finite, so we can fix 
$n\in \N $ such that $g_1(m)\not = g_2(m)$ whenever $m\in\N$ and $m>n$.
Since 
$h$
is injective by item (i) of the definition of $\mathscr{H}$,  we have
\begin{equation}
\label{eq:5:f}
f_1(m)=h\circ g_1(m)\not = h\circ g_2(m)=f_2(m)
 \text{ for every }
m>n. 
\end{equation}
Then the set $\{m\in \N:f_1(m)=f_2(m)\}$ is finite.

\smallskip
{\em Case 3\/}. {\sl $h_1=h_2=h,g_1=g_2=g$ and $ A_1\not =A_2.$\/}
Since $A_i\in\mathcal{A}_{g_i}=\mathcal{A}_g$ for $i=1,2$,
the set $A_1\cap A_2$ is finite by item (a) of the definition of $\mathcal{A}_g$.
Hence,
$A_1\cap A_2\subseteq  n$ for some $n\in\N$. 
Therefore $\{m\in \N:f_1(m)=f_2(m)\}\subseteq A_1\cap A_2\subseteq n $. 
\end{proof}

\begin{claim}\label{van:douwen}
For every injective function $p\in D^P$, where $P$ is an infinite subset of $\N$, there is $f\in \mathscr{F}$ such that 
the set $\{n\in P:p(n)=f(n)\} $ is infinite.
\end{claim}
\begin{proof}
Since $p$ is injective, $p(P)$ is infinite. 
By item (iii) of the definition of $\mathscr{H},$ there is  $h\in\mathscr{H}$ such that 
the set $B=\{n\in P:p(n)\in h(\N)\}$ is infinite. Define $q\in\N^{B}$ by $q(n)=h^{-1}(p(n))$ for every $n\in B.$ Since $h$ is injective, $q$ is well defined. Since $\mathscr{G}$ is a van Douwen MAD family, there 
is $g\in\mathscr{G}$ such that the set $C=\{n\in B:q(n)=g(n)\}$ is infinite. Since $p$  is
 injective, $g$ is injective in $C.$ By maximality of $\mathcal{A}_g$, there is $A\in \mathcal{A}_g$ such that 
$A\cap C$ is infinite. Therefore $f=f_{g,h,A}\in \mathscr{F}.$
Take $n\in A\cap C.$ Then $f(n)=h(g(n))=h(q(n))=h( h^{-1}(p(n)))=p(n).$ Therefore
$A\cap C\subseteq \{n\in P:p(n)=f(n)\}.$
\end{proof}

Item (A) is proved in  Claim \ref{almost:disjoint}, and item (B) is proved in 
Claim \ref{van:douwen}.
\end{proof}

\section{Example showing that arrow $(c_1)$ of Diagram 3
is not reversible}
\label{arrow:c_1}

Berner gave an example of a pseudocompact space without a dense relatively countably compact subspace in \cite[Section 5]{B}. The space from our next theorem is a quite significant modification of Berner's example based on the family $\mathscr{F}$ constructed in Theorem \ref{set-theoretic:result}.

\begin{theorem}
\label{arrow:3}
There exists 
a locally compact, first-countable, zero-dimensional space $X$ such that  Player $\B$ has a winning strategy in $\Ssp(X)$ but 
does not have a stationary winning strategy even  in $\Sp(X).$
\end{theorem}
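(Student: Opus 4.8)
The plan is to build $X$ as a Mrówka-type space over the set $D = \Seq(X_0)$ for a suitable ground space, so that the ``points at infinity'' are indexed by the injective almost-disjoint family $\mathscr{F}$ from Theorem~\ref{set-theoretic:result}. More precisely, I would fix a countable discrete set (say $\omega$ itself, or rather the set $\N$ together with a copy of each $f\in\mathscr{F}$ as a limit point of the range $f(\dom f)\subseteq D$), and topologize $X = D \cup \mathscr{F}$ by declaring $D$ discrete and giving each $f\in\mathscr{F}$ the neighborhood filter generated by $\{f\}\cup (f(\dom f)\setminus F)$ for finite $F$. Property~(A) of $\mathscr{F}$ guarantees that distinct limit points have (eventually) disjoint neighborhoods, so $X$ is Hausdorff, and in fact zero-dimensional, locally compact (each basic neighborhood of a point of $\mathscr{F}$ is compact, being a convergent sequence plus its limit) and first-countable. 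The twist over a plain Mrówka space is that the family lives on $D=\Seq(X)$ and consists of \emph{injections}; this is exactly what is needed so that, in a play of $\Ssp(X)$, Player $\B$ can encode the history of the play as an element of $\Seq(X)$ and thereby use property~(B) to steer the chosen points into the range of a single $f\in\mathscr{F}$.

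The two things to verify are: (1)~Player $\B$ has a (non-stationary) winning strategy in $\Ssp(X)$, and (2)~Player $\B$ has no stationary winning strategy even in $\Sp(X)$. For~(1): when $\A$ plays a non-empty open $U_n$, note that $U_n$ must contain a point of $D$ (the points of $D$ are dense), so $\B$ can always choose $x_n\in U_n\cap D$. Using the non-stationary nature of the strategy, $\B$ records the finite sequence $(x_1,\dots,x_{n-1})\in\Seq(X)=D$ and arranges the choices so that the enumerated points $\{x_n:n\in\N\}$ form (after passing to a subsequence) an injective partial function $p\in D^P$ in the sense of Claim~\ref{van:douwen}; then property~(B)/Claim~\ref{van:douwen} produces $f\in\mathscr{F}$ with $\{n\in P: p(n)=f(n)\}$ infinite, i.e. infinitely many of the $x_n$ lie in the range of $f$, so that subsequence converges to $f$ in $X$. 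This gives a convergent subsequence and hence a win for $\B$ in $\Ssp(X)$. The bookkeeping that makes $\{x_n\}$ injective and a genuine partial function (rather than having repetitions or domain collisions) is where the injectivity built into $\mathscr{F}$ and into Theorem~\ref{set-theoretic:result} is essential.

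For~(2), the main obstacle and the heart of the example: suppose $\TAU$ were a stationary winning strategy for $\B$ in $\Sp(X)$. By Theorem~\ref{two:dense:corollaries}(ii), this is equivalent to $X$ having a dense subspace $E$ relatively countably compact in $X$ — every sequence from $E$ has an accumulation point in $X$. Density forces $E\cap D$ to be dense in $D$, and since $D$ is discrete this means $D\subseteq \overline{E}$ but more usefully $E\supseteq D'$ for a cofinite-in-each-$f(\dom f)$ part of $D$; the point is that $E$ must meet every $f(\dom f)$ in an infinite set, and one shows $E\cap D$ actually contains an \emph{injective} partial function $p$ whose range meets each $f(\dom f)$ finitely (this is where maximality of $\mathscr{F}$, i.e. property~(B), is used in reverse: build $p$ diagonally, avoiding finitely much of each previously-met $f(\dom f)$). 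Then the sequence enumerating this $p$ has no accumulation point: its only possible accumulation points are the $f\in\mathscr{F}$, and each such $f$ sees only finitely many terms by construction, contradicting relative countable compactness of $E$. Hence no such $E$, and no stationary winning strategy for $\B$ in $\Sp(X)$. I expect the delicate step to be the simultaneous bookkeeping in part~(1): ensuring that $\B$'s recorded sequences stay inside $\dom$ of the relevant functions and that the resulting selection is a legitimate injective partial function eligible for Claim~\ref{van:douwen}, while at the same time the ground set $D=\Seq(X)$ is large enough to host $\mathscr{F}$ — this is precisely why Theorem~\ref{set-theoretic:result} is stated for an arbitrary infinite set $D$ rather than just $D=\N$.
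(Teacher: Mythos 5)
Your proposal diverges from the paper's construction in a way that breaks both halves of the theorem. The first problem is Hausdorffness. You topologize $X=D\cup\mathscr{F}$ by making each $f\in\mathscr{F}$ a limit of its \emph{range} $f(\dom f)$, and you invoke property (A) to separate distinct limit points. But property (A) only says that the \emph{graphs} of distinct $f,g\in\mathscr{F}$ agree at finitely many arguments; it says nothing about the ranges $f(\dom f)$ and $g(\dom g)$, which can perfectly well have infinite intersection (two injections with disjoint graphs can have cofinally overlapping images). In that case every neighborhood of $f$ meets every neighborhood of $g$ and your space is not even Hausdorff. This is precisely why the paper does not build a plain Mr\'owka space: it takes $X=(C\times\mathscr{F})\cup(C\times D)$ with $C$ the Cantor set, and the basic neighborhood of $(c,f)$ is assembled from the pieces $V_m^c\times\{f(m)\}$, where $\{V_m^c:m\in\N\}$ is a clopen partition of $C\setminus\{c\}$. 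The partition index $m$ re-encodes the \emph{argument} of $f$, so that two neighborhoods can only meet where $f_1(m)=f_2(m)$ for the same $m$ --- and that is exactly what property (A) controls.

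The second problem is your argument for the non-existence of a dense relatively countably compact subset. In your space the isolated points $D$ form a dense set, and any dense set contains $D$; an injective sequence in $D$ \emph{is} an injective partial function $p:\N\to D$, so property (B) hands you an $f\in\mathscr{F}$ agreeing with $p$ infinitely often, i.e.\ an accumulation point. Thus $D$ would be dense and relatively countably compact, and by Theorem \ref{two:dense:corollaries}(ii) Player $\mathsf{P}$ \emph{would} have a stationary winning strategy in $\Sp(X)$ --- the opposite of what is wanted. Your proposed diagonalization ``avoiding finitely much of each previously-met $f(\dom f)$'' cannot succeed against the whole (uncountable, maximal) family, exactly because (B) captures every injective partial function. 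The paper escapes this trap by two further ingredients you omit: the set $D$ is taken of cardinality $\mathfrak{c}^+>|C|=\mathfrak{c}$, so a pigeonhole argument forces any dense subset to contain an infinite set inside a single column $\{c^*\}\times D$; and the topology is arranged so that each such column is closed and discrete (Claim \ref{claim:closed:and:discrete}), because $c^*$ lies in at most one member $V_n^c$ of each partition. A sequence with constant first coordinate therefore has no accumulation point, even though its second coordinates form an injective partial function captured by some $f$. In short: the convergence mechanism lives in the interplay between the $C$-coordinate (convergence in the Cantor set, aligned with the partitions via Claim \ref{subsequence:claim}) and the $D$-coordinate (property (B)), and neither the separation axiom nor the negative half can be recovered from the range-based Mr\'owka space you describe. (Minor additional point: taking $D=\Seq(X)$ is circular and unnecessary; the paper uses $\Seq(X)$ only in Theorem \ref{from:winning:to:stationary}, which is unrelated to this construction.)
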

\begin{proof}
Let $C$ be 
the Cantor set.
For every $c\in C$,
fix a strictly decreasing base $\{W_n^c:n\in \N\}$ at $c$ consisting of clopen subsets of $C$ such that $W_0^c=C$,
and let
\begin{equation}
\label{Vnc}
V_n^c=W_n^c\setminus W_{n+1}^c
\text{ for every }n\in \N.
\end{equation}

The following claim is immediate from this definition:
\begin{claim}
\label{claim:disjoint:partition}
For every $c\in C$, the family
$\{V_n^c:n\in\N\}$ is a partition of $C\setminus\{c\}$ consisting of non-empty clopen subsets of $C$. 
\end{claim}

Let $ D$ be 
a set
of cardinality $\mathfrak{c}^+$.
Consider the discrete topology on $D$, and let 
$C\times D$ be equipped with the Tychonoff product topology. 
For $(c,f)\in C\times \mathscr{F}$
and
$n\in $ dom$(f)$, both
\begin{equation}
\label{eq:M}
M_{c,f}^n=V_n^c\times \{f(n)\}
\end{equation}
and 
\begin{equation}
\label{eq:0}
O_{c,f}^n=\bigcup_{m\in \textrm{ dom}(f), m> n} M_{c,f}^m
\end{equation}
are clopen subsets of  $C\times D$.

\begin{claim}
\label{disjoint:claim}
If $(c_1,f_1), (c_2,f_2)\in C\times \mathscr{F}$
are distinct, then 
\begin{equation}
\label{eq:disjoint:O}
O_{c_1,f_1}^n\cap O_{c_2,f_2}^{n}=\emptyset
\end{equation}
for some $n\in\N$.
\end{claim}
\begin{proof}
{\em Case 1\/}. {\sl $c_1\not =c_2$\/}. 
Since $\{W_n^{c_i}:n\in \N\}$ is a strictly decreasing base  at $c_i$ for $i=1,2$,
there exists $n\in \N$ such that $W_n^{c_1}\cap W_n^{c_2}=\emptyset$. Moreover,
\eqref{Vnc} implies that
$\bigcup_{m>n} V_m^{c_i}\subseteq W_n^{c_i}$ for $i=1,2$. 
Combining this with \eqref{eq:M} and \eqref{eq:0}, we get
\eqref{eq:disjoint:O}.

{\em Case 2\/}. {\sl $c_1=c_2,f_1\not =f_2$.\/} 
By Claim \ref{almost:disjoint}, there is $n\in\N$ such that $\{m\in \N:f_1(m)= f_2(m)\}\subseteq n.$
Suppose that \eqref{eq:disjoint:O} is not satisfied. By \eqref{eq:0}, there exists $m_i\in A_i$ satisfying $m_i>n$ for $i=1,2$ such that 
$M_{c,f_1}^{m_1}\cap M_{c,f_2}^{m_2}\not=\emptyset$.
Since $M_{c,f_i}^{m_i}=V_{m_i}^c\times \{f_i(m_i)\}$
for $i=1,2$ by \eqref{eq:M},
we deduce that
$V_{m_1}^c\cap V_{m_2}^c\not=\emptyset$ and
$f_1(m_1)=f_2(m_2)$.
From the former inequality and Claim \ref{claim:disjoint:partition}, 
we conclude that $m_1=m_2=m$, so from the latter equality we get
$f_1(m)=f_2(m)$. Hence $m< n,$ which is a contradiction. Therefore \eqref{eq:disjoint:O} is satisfied.
\end{proof}

Without lost of generality we can assume that $\mathscr{F}\cap D=\emptyset$. Consider
the topology on 
the set
$$
X=(C\times \mathscr{F})\cup (C\times D)
$$
defined by declaring
$C\times D$ to be 
an open subspace of $X$ and taking the family
\begin{equation}
\label{eq:B}
\mathcal{B}_{c,f}=\{B_{c,f}^n:n\in \N\},
\end{equation}
where
\begin{equation}
\label{eq:B:element}
B_{c,f}^n=\{(c,f)\}\cup O_{c,f}^n
\ \ \text{ for }
n\in\N,
\end{equation}
as a local base at each point $(c,f)\in C\times \mathscr{F}$.

Clearly, $C\times D$ is dense in $X$ and $C\times \mathscr{F}$ is a closed discrete subspace of $X$.
The fact that $X$ is first countable is straightforward from the definition. 

\begin{claim}
\label{Hausdorff:claim}
$X$ is Hausdorff. 
\end{claim}
\begin{proof}
Let 
 $(c_1,f_1), (c_2,f_2)\in C\times \mathscr{F}$ 
be distinct.
Let $n\in\N$ be as in the conclusion of Claim \ref{disjoint:claim}. Since $\mathscr{F}\cap D$ is empty,
it follows from \eqref{eq:disjoint:O} and \eqref{eq:B:element}
that 
$B_{c_1,f_1}^n\cap B_{c_2,f_2}^{n}=\emptyset$.
Since $B_{c_i,f_i}^n\in \mathcal{B}_{c_i,f_i}$ for $i=1,2$
by \eqref{eq:B},
the points $(c_1,f_1)$ and $(c_2,f_2)$ of $X$
can be separated by two disjoint basic open subsets of $X$. 
Next, assume that
$(c_0,d)\in C\times D$ and $(c,f)\in C\times \mathscr{F}$. 
Since $f\in\mathscr{F}\subseteq \mathscr{I}(D)$, $f$ is an injection by definition of $\mathscr{I}(D)$,
so 
  there is at most one $n\in \N$ such that 
$f(n)=d.$ If $m>n$, 
$(C\times\{d\})\cap M_{c,f}^m=\emptyset$
by \eqref{eq:M}.
Combining this with \eqref{eq:0},
we get $(C\times\{d\})\cap O_{c,f}^n=\emptyset$,
so
$(C\times\{d\})\cap B_{c,f}^n=\emptyset$
by \eqref{eq:B:element}.
It follows from \eqref{eq:B} and \eqref{eq:B:element} that 
$B_{c,f}^n$ is an open neighborhood of 
$(c,f)$. 
Since $C\times \{d\}$ is open in $C\times D$ and $C\times D$ is open in $X$, the set $C\times \{d\}$ is open in $X$.
We conclude that  $C\times \{d\}$ and $B_{c,f}^{n}$
are disjoint open subsets of $X$ that separate $(c_0,d)$ and $(c,f)$. 
Finally, since $C\times D$ is Hausdorff and open in $X,$ any two points in $C\times D$ can be separated by disjoint open subsets of $X$.
This finishes the proof that $X$ is Hausdorff. 
\end{proof}

\begin{claim}
\label{claim:zero-dimensional}
$X$ is locally compact and zero-dimensional.
\end{claim}
\begin{proof}
Let $(c,d)\in C\times D$ be arbitrary. Consider an open subset $U$ of $X$ containing $(c,d)$. Then $U\cap (C\times D)$ is an open subset of $C\times D$ containing $(c,d)$. Since $C\times D$ is locally compact and zero-dimensional,  there exists a clopen compact subset $K$ of $C\times D$ 
such that $(c,d)\in K\subseteq U\cap (C\times D)$.
Since $K$ is open in $C\times D$ and $C\times D$ is open in $X$, $K$ is open in $X$. Since $K$ is compact and $X$ is Hausdorff by Claim~\ref{Hausdorff:claim}, $K$ is closed in $X$. Thus, $K$ is a compact clopen subset of $X$ such that $(c,d)\in K\subseteq U$.

Let $(c,f)\in C\times \mathscr{F}$ 
be arbitrary. Since $\mathcal{B}_{c,f}$ is a local base of $X$ at
$(c,f)$, in view of  \eqref{eq:B}, it suffices to check that
each $B_{c,f}^k$ is a compact clopen subset of $X$.

Fix  $k\in\N$. By definition, $B_{c,f}^k$ is an open subset of $X$. 
Let us check that $B_{c,f}^k$is a closed subset of $X$.
Since $B_{c,f}^k\cap (C\times D)=O_{c,f}^k$ by \eqref{eq:B:element},
the latter set is closed in $C\times D$  and $C\times D$ is open in $X$, it follows
that every point $(c,d)\in (C\times D)\setminus B_{c,f}^k$ 
has an open neighborhood in $X$ disjoint from $B_{c,f}^k$.
Let $(c',f')\in (C\times \mathscr{F})
\setminus 
B_{c,f}^k$.
Then $(c,f)\not=(c',f')$, so we can apply Claim \ref{disjoint:claim}
to find $n\in\N$ such that
$O_{c,f}^n\cap O_{c',f'}^{n}=\emptyset$.
If $n\le k$, then $O_{c,f}^k\subseteq O_{c,f}^n$ and 
$O_{c',f'}^{k}\subseteq O_{c',f'}^{n}$ by \eqref{eq:0}, 
so the sets $O_{c,f}^k$ and $O_{c',f'}^{k}$ are disjoint, which implies
$B_{c,f}^k\cap B_{c',f'}^{k}=\emptyset$ by \eqref{eq:B:element}.
Assume now that $k< n$.
Then 
$$
B_{c,f}^k\setminus B_{c,f}^n=
O_{c,f}^k\setminus O_{c,f}^n
=
\bigcup_{m\in A, k< m\le n} M_{c,f}^m
=
\bigcup_{m\in A, k< m\le n} V_m^c\times \{f(m)\}
$$
by \eqref{eq:M}, \eqref{eq:0} and \eqref{eq:B:element},
so this set is compact, and thus closed in $X$.
Now $B_{c',f'}^{n}\setminus (B_{c,f}^k\setminus B_{c,f}^n)$ is an open neighbourhood of $(c',f')$
disjoint from $B_{c,f}^k$.

We have proved that $B_{c,f}^k$ is a clopen subset of $X$.
Since $B_{c,f}^k\setminus B_{c,f}^n$ is compact whenever $k<n$, it follows that each $B_{c,f}^k$ is compact.
\end{proof}

Since $X$ is Hausdorff  (Claim \ref{Hausdorff:claim}) and zero-dimensional (Claim \ref{claim:zero-dimensional}), it is Tychonoff.

\begin{claim}
\label{claim:closed:and:discrete}
For every $c^*\in C$, the set $Z_{c^*}=\{c^*\}\times D$ is discrete and closed in $X$.  
\end{claim}
\begin{proof}
Clearly, $Z_{c^*}$ is discrete in $C\times D$, and thus also in $X$.
Furthermore, $Z_{c^*}$ is obviously closed in $C\times D$. So it remains only to show that no point $(c,f)\in X\setminus (C\times D)=C\times \mathscr{F}$
lies in the closure of $Z_{c^*}$.
Fix a point $(c,f)\in C\times \mathscr{F}$.
By Claim \ref{claim:disjoint:partition},
there exists at most one $n\in \N $ such that $c^{*}\in V_{n}^c.$ (If no such $n$ exists, we define $n=1$.)
By 
\eqref{eq:M}, \eqref{eq:0} and \eqref{eq:B:element},
$B_{c,f}^{n}$ 
does not intersect $Z_{c^{*}}.$ 
Since $B_{c,f}^{n}\in \mathcal{B}_{c,f}$ by \eqref{eq:B}, it is an open neighbourhood of $(c,f)$ in $X$.
\end{proof}

\begin{claim}
Player $\B$ does not have a stationary winning strategy in $\Sp(X)$.
\end{claim}
\begin{proof}
By Theorem \ref{stationary_winning_strategy}, 
it suffices to show that $X$ does not have a dense relatively countably compact subset. Let $Y$ be a dense subset of $X$.
For every 
$d\in D$, 
the set $U_d=C\times \{d\}$ is open in $X$, and since $Y$ is dense in $X$,
there exists 
$c_d\in C$ such that
$(c_d,d)\in U_d
\cap Y$.
 Since $|C|=\mathfrak{c},$ 
there exist $c^*\in C$ and a faithfully indexed set $\{d_n:n\in \N\}$ such that
$c^*=c_{d_n}$ for every $n\in\N.$  
Clearly, $S=\{(c^*,d_n):n\in\N\}\subseteq Z_{c^*}$. 
Since
$Z_{c^*}$ is a closed discrete subspace of $X$ by Claim \ref{claim:closed:and:discrete},  $S$ has no accumulation points in $X$.
Since $S$ is contained in $Y$, we conclude that $Y$ is not relatively countably compact in $X$.
\end{proof}

\begin{claim}
\label{subsequence:claim}
Suppose that $J$ is an infinite subset of $\N$ and $\{c_j:j\in J\}$ is a sequence in $C$ converging to $c\in C$ such that
$c_l\not= c_m$ whenever $l\not=m$. Then there exist strictly increasing functions 
$j:\N\to J$
and
$k:\N\to\N$
such that
$c_{j(m)}\in V_{k(m)}^c$ for every $m\in \N$.
\end{claim}
\begin{proof}
Without loss of generality, we shall assume that $c_j\not=c$ for every $j\in\N$.
Since $\{V_n^c:n\in\N\}$ is a partition of $C\setminus\{c\}$
by Claim \ref{claim:disjoint:partition},
each $c_j$ is contained in exactly one element $V_{n_j}^c$ of this partition. Moreover,
since $V_n^c$ is a clopen subset of $C$ and the sequence $\{c_j:j\in J\}$ converges to $c\not\in V_n^c$, each $V_n^c$ contains at most finitely many elements of the sequence $\{c_j:j\in J\}$.

By induction on $m\in \N$,
we shall define $j(m)\in J$ and $k(m)\in \N$ such that:
\begin{itemize}
\item[(1$_m$)]
$c_{j(m)}\in V_{k(m)}^c$;
\item[(2$_m$)]
if $m\ge 2$, then
$j(m)>j(m-1)$ and $k(m)>k(m-1)$.
\end{itemize}

Let $j(1)\in J$ be arbitrary. Define $k(1)=n_{j(1)}$.
Then (1$_1$) and (2$_1$) hold.

Let $m\geq 2$ and suppose that 
$j(s)\in J$ and $k(s)\in \N$ satisfying (1$_s$) and (2$_s$)
have already been defined 
for every $s\leq m-1$. 
The set $\bigcup_{n\le k(m-1)} V_n^c$ contains only finitely many elements of the sequence $\{c_j:j\in J\}$. Since $J$ is infinite,
we can find $j(m)\in J$ such that $j(m-1)<j(m)$ and
$c_{j(m)}\not\in \bigcup_{n\le k({m-1})} V_n^c$.
Let $k(m)=n_{j(m)}$.
Then $c_{j(m)}\in V_{n_{j(m)}}^c=V_{k(m)}^c$,
which implies $k(m-1)<k(m)$.
Thus, (1$_m$) and (2$_m$) hold.
\end{proof}

\begin{claim}
\label{claim:1}
If $x_n=(c_n,d_n)\in C\times D=M$ for every $n\in\N$ and 
$c_n\not=c_m$ whenever $m,n\in\N$ and $m\not=n$, then the sequence
$\{x_n:n\in\N\}$ has a subsequence converging in $X$.
\end{claim}
\begin{proof}
{\em Case 1\/}. {\sl There exists $d\in D$ such that $N_d=\{n\in\N: d_n=d\}$ is infinite.\/}
Since $\{c_n:n\in N_d\}$ is a sequence of elements of the Cantor set $C$, there exists an infinite set $K\subseteq N_d$ such that the sequence  $\{c_n:n\in K\}$ converges to some $c\in C$.  
Now the subsequence $\{x_n:n\in K\}$ of the sequence $\{x_n:n\in\N\}$ converges to the point $(c,d)\in C\times D$.

{\em Case 2\/}. {\sl The set $N_d=\{n\in\N: d_n=d\}$ is finite for each $d\in D$.\/}
In this case, we can choose an infinite set $N\subseteq \N$ such that
$d_m\not=d_n$ whenever $m,n\in N$ and $d_m\not=d_n$. 
Since $C$ is compact metric, there is an infinite subset
$J\subseteq N$ such that the sequence 
$\{c_n:n\in J\}$ converges to some point $c\in C.$  
Let 
$j$ and $k$ 
be as in the conclusion of Claim 
\ref{subsequence:claim}. Since $k$ is an injection, the set  
$P=k(\N)$ is infinite as well.
Define
$p:P\to D$ by 
\begin{equation}
\label{eq:def:p}
p(m)=d_{j\circ k^{-1}(m)}
\ \text{ for }\ 
m\in P.
\end{equation}
Since $j$ and $k$ are injective,  $p$ is well defined  and injective.  
By Claim \ref{van:douwen}, there is $f\in \mathscr{F}$ such that 
the set $T=\{m\in P:p(m)=f(m)\} $ is infinite. Therefore the set $S=k^{-1}(T)$ is infinite. 
Let $n\in S.$ Then  $f(k(n))=p(k(n))=d_{j(n)}.$ 
Since $c_{j(n)}\in V_{k(n)}^c,$ 
then
\begin{equation}
\label{eq:xjm}
x_{j(n)}=(c_{j(n)},d_{j(n)})\in V_{k(n)}^c\times\{f(k(n))\}=
M_{c,f}^{k(n)}
\end{equation}

It remains only to observe that
the sequence 
$\{x_{j(n)}:n\in S\}$ converges to the point
$(c,f)$ in $X.$ 
Indeed, let $m\in\N$ be arbitrary.
Since the function $k$ is monotonically increasing, 
there exists $l\in\N$ such that $k(n)>m$ provided that $n\ge l$.
It follows from 
\eqref{eq:0}, \eqref{eq:B:element} and
\eqref{eq:xjm}
that $x_{j(n)}\in M_{c,f}^{k(n)}\subseteq O_{c,f}^m\subseteq B_{c,f}^m$ for $n\in S$ and
$n\ge l$.
\end{proof}

\begin{claim}
Player $\B$ has a winning strategy in $\Ssp(X).$ 
\end{claim}
\proof
We define a strategy $\TAU: \Seq (\mathcal{O}(X))\setminus\{\emptyset\}\to  X$ for Player $\B$
by induction on the length of the sequence $(V_1,\dots,V_n)\in \Seq (\mathcal{O}(X))\setminus\{\emptyset\}$.

For every 
$V\in \mathcal{O}(X)$ select a point $\TAU(V)\in V\cap (C\times D).$  
This can be done as $C\times D$ is dense in $X$.

Let $m\in \N$ and suppose that for every $n\in\{1,\ldots,m\}$ and 
$(V_1,\ldots,V_n)\in \Seq (\mathcal{O}(X))\setminus\{\emptyset\}$
we have already selected a point $\TAU(V_1,\ldots,V_n)\in V_n\cap (C\times D).$
Let $(V_1,\ldots,V_{m+1})\in\Seq (\mathcal{O}(X))\setminus\{\emptyset\}.$
For every $n\in\{1,\ldots,m\}$ define $y_n=\TAU(V_1,\ldots,V_n).$ 
Define $F=\pi(\{y_1,\dots,y_{m}\}),$ where $\pi:C\times D\to C$ is the projection in the first coordinate. 
Then set $F \times D$ is closed and nowhere dense in $C\times D$, so 
$(V_{m+1}\cap (C\times D))\setminus(F\times D)\not=\emptyset.$
Therefore we can select a point 
\begin{equation}
\label{eq:strategy}
\TAU(V_1,V_2,\ldots,V_{m+1})\in (V_{m+1}\cap (C\times D))\setminus (F\times D).
\end{equation}

To show that the strategy $\TAU$ is winning for $\B$, 
let $\SIGMA:\Seq(X)\to \mathcal{O}(X)$ be an arbitrary strategy for Player $\A$ in $\Ssp(X).$
Let $w_{\SIGMA,\TAU}=(V_1,y_1,V_2,y_2,\ldots)$ be the play produced by $\SIGMA$ and $\TAU$. 

For every $n\in\N$, let $y_n=(c_n,d_n)\in C\times D$. It follows from
\eqref{eq:strategy} that the sequence $\{y_n:n\in\N\}$ satisfies the assumptions of Claim  \ref{claim:1}, applying which 
we conclude that the sequence $\{y_n:n\in\N\}$ has a convergent subsequence in $X$.
This proves that Player $\B$
wins the play $w_{\SIGMA,\TAU}$. 
Since $\SIGMA$ was arbitrary, $\TAU$ is a winning strategy for Player $\B$. 
The proof is complete.
\end{proof}

The next corollary 
shows that arrow $(c_1)$ of Diagram 3
is not reversible. 

\begin{cor}
\label{cor:5.11}
Let $\mathcal{S}$ be a topological property of sequences which is weaker than the property from item (i) of Example \ref{examples}
and stronger than the property from item (v) of the same example. 
Let $X$ be the space from Theorem \ref{arrow:3}.
Then 
Player $\B$ has a winning strategy in the game $\OP(X,\mathcal{S})$ but does not have a stationary winning strategy in the same game.
\end{cor}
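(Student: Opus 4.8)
The plan is to deduce Corollary~\ref{cor:5.11} directly from Theorem~\ref{arrow:3} together with the monotonicity provided by Proposition~\ref{winning:strategies:for:R:and:S}. Let $\mathcal{R}$ denote the property from item (i) of Example~\ref{examples} (sequential compactness of subsequences) and let $\mathcal{T}$ denote the property from item (v) (having an accumulation point). By hypothesis, $\mathcal{R}$ is stronger than $\mathcal{S}$ and $\mathcal{S}$ is stronger than $\mathcal{T}$. By Definition~\ref{def:Ssp:Sp}, $\OP(X,\mathcal{R})$ is the game $\Ssp(X)$ and $\OP(X,\mathcal{T})$ is the game $\Sp(X)$.

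First I would establish that Player $\B$ has a winning strategy in $\OP(X,\mathcal{S})$. By Theorem~\ref{arrow:3}, Player $\B$ has a winning strategy $\TAU$ in $\Ssp(X) = \OP(X,\mathcal{R})$. Since $\mathcal{R}$ is stronger than $\mathcal{S}$, Proposition~\ref{winning:strategies:for:R:and:S}(ii) tells us that $\TAU$ is also a winning strategy for $\B$ in $\OP(X,\mathcal{S})$. This settles the positive half of the statement.

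Next I would show that Player $\B$ does \emph{not} have a stationary winning strategy in $\OP(X,\mathcal{S})$. By Theorem~\ref{arrow:3}, Player $\B$ has no stationary winning strategy in $\Sp(X) = \OP(X,\mathcal{T})$. Suppose, for contradiction, that Player $\B$ had a stationary winning strategy $\TAU'$ in $\OP(X,\mathcal{S})$. Since $\mathcal{S}$ is stronger than $\mathcal{T}$, Proposition~\ref{winning:strategies:for:R:and:S}(ii) applied with the pair $(\mathcal{S},\mathcal{T})$ shows that $\TAU'$ is a winning strategy for $\B$ in $\OP(X,\mathcal{T}) = \Sp(X)$; and it is evidently still stationary, since stationarity of a strategy for $\B$ (equation~\eqref{eq:tau:stationary}) is a property of the function $\TAU'$ alone and does not depend on which game is being played. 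This contradicts the conclusion of Theorem~\ref{arrow:3}, completing the argument.

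There is essentially no obstacle here beyond carefully matching notation: the content is entirely carried by Theorem~\ref{arrow:3} and the transfer principle of Proposition~\ref{winning:strategies:for:R:and:S}. The only point worth spelling out is that ``stationary'' is preserved under the strategy-transfer of Proposition~\ref{winning:strategies:for:R:and:S}, which is immediate because that proposition asserts that the very same function serves as a winning strategy in the weaker game, and the defining condition~\eqref{eq:tau:stationary} for $\B$'s strategies to be stationary refers only to the function, not to the winning condition. Thus the corollary follows formally, and correspondingly arrow $(c_1)$ of Diagram~3 is not reversible.
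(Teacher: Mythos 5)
Your proposal is correct and follows essentially the same route as the paper's own proof: the positive half transfers the winning strategy from $\Ssp(X)$ down to $\OP(X,\mathcal{S})$ via Proposition~\ref{winning:strategies:for:R:and:S}(ii), and the negative half argues by contradiction, pushing a hypothetical stationary winning strategy from $\OP(X,\mathcal{S})$ down to $\Sp(X)$. Your explicit remark that stationarity is a property of the function alone, preserved under the transfer, is a point the paper leaves implicit.
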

\begin{proof}
Let $\mathcal{S}_1$ denote the property from item (i) of Example \ref{examples}.
By Definition \ref{def:Ssp:Sp}(i),
the game $\Ssp(X)$ is precisely the game $\OP(X,\mathcal{S}_1)$. 
Since $\B$ has a winning strategy in the game $\Ssp(X)$ 
by Theorem \ref{arrow:3},
this means
that $\B$ has a winning strategy in the game $\OP(X,\mathcal{S}_1)$.
Since $\mathcal{S}$ is weaker than  $\mathcal{S}_1$,
Proposition \ref{winning:strategies:for:R:and:S}(ii)
implies that $\B$ has a winning strategy in the game $\OP(X,\mathcal{S})$.

Assume that $\B$ has a stationary winning strategy $\TAU$ 
in $\OP(X,\mathcal{S})$.
Since $\mathcal{S}$ is 
stronger than the property $\mathcal{S}_2$ from item (v) of Example \ref{examples}, 
Proposition \ref{winning:strategies:for:R:and:S}(ii)
implies that
$\TAU$ is also a winning strategy for $\B$ in the game $\OP(X,\mathcal{S}_2)$.
By Definition \ref{def:Ssp:Sp}(ii),
the game $\OP(X,\mathcal{S}_2)$ coincides with $\Sp(X)$.
We conclude that $\B$ has a stationary winning strategy in 
the game $\Sp(X)$ on $X$, in contradiction with the conclusion of 
Theorem \ref{arrow:3}.
\end{proof}

\section{Example showing that arrow $(c_4)$ of Diagram 3 is not reversible}

\label{exa:c}
\label{sec:6}

\begin{theorem}\label{berner}
There exists a \ssp\ space $X$ such that  Player $\A$ has a 
winning strategy in $\Sp(X)$.
\end{theorem}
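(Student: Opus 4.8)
The plan is to exhibit a concrete witness space $X$ --- a variant of Berner's pseudocompact space without a dense relatively countably compact subspace \cite[Section~5]{B} --- and to verify separately that $X$ is \ssp\ and that Player $\A$ has a winning strategy in the game $\Sp(X)$. Two preliminary remarks shape the construction. First, $X$ must have no dense relatively countably compact subspace at all: otherwise, by Theorem~\ref{two:dense:corollaries}(ii), Player $\B$ would have a stationary winning strategy in $\Sp(X)$, and then Player $\A$ could have none. Second, and in particular, $X$ must be dense-in-itself: if $X$ had a dense set of isolated points, then for a sequence of distinct isolated points $x_n$ the singletons $U_n=\{x_n\}$ would be open and the only admissible selection $x_n\in U_n$ would be the sequence $\{x_n:n\in\N\}$ itself, so, $X$ being \ssp, the isolated points of $X$ would form a dense, relatively sequentially compact subspace --- which the first remark forbids. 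Thus the selective sequential pseudocompactness of $X$ must be ``genuinely selective'': the selector has to exploit its global view of the entire sequence $(U_n)_{n\in\N}$ of open sets, an advantage that Player $\B$, who answers move by move, does not enjoy.

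For the construction I would follow Berner's scheme but graft onto it compact metrizable, dense-in-itself ``fibres'' (copies of the Cantor set), glued along limit points indexed by a sufficiently large, combinatorially rich set (of cardinality $\mathfrak{c}^+$, so that finitely many points never exhaust the available fibres), the gluing arranged --- in the spirit of the bookkeeping used for Theorem~\ref{arrow:3} --- so that: (i)~any countable family of open subsets of $X$ is ``supported'' on a countable, metrizably controlled portion $X_0$ of $X$; (ii)~every dense subspace of $X$ meets an uncountable closed discrete set, and so fails to be relatively countably compact; and (iii)~$X$ is pseudocompact. That $X$ is Tychonoff (and, with some care, locally compact, zero-dimensional and first countable), pseudocompact, and without a dense relatively countably compact subspace would then be a routine, if lengthy, verification along Berner's lines.

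To see that $X$ is \ssp, let $\{U_n:n\in\N\}$ be a sequence of non-empty open subsets of $X$. By~(i) the sets $U_n$ all lie over a countable part of $X$ on which the relevant closures are compact and metrizable; choosing $x_n\in U_n$ inside the Cantor fibres and running a diagonal argument of the same flavour as Claim~\ref{claim:1} in the proof of Theorem~\ref{arrow:3}, one extracts a subsequence of $\{x_n:n\in\N\}$ converging in $X$. For the winning strategy of Player $\A$ in $\Sp(X)$, Player $\A$ plays adaptively, ``chasing'' Player $\B$: given the points $x_1,\dots,x_{n-1}$ already chosen by Player $\B$, Player $\A$ selects $U_n$ to be a non-empty clopen subset of a fibre lying ``beyond'' the finitely many fibres touched by $x_1,\dots,x_{n-1}$, the successive fibres chosen by Player $\A$ being governed by a fixed injective bookkeeping, so that they are pairwise disjoint and march off through the index set. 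Then, whatever points $x_n\in U_n$ Player $\B$ picks, the sequence $\{x_n:n\in\N\}$ lies in a pairwise disjoint family of clopen sets and has no accumulation point in $X$, so Player $\A$ wins. This does not contradict selective sequential pseudocompactness precisely because the sequence of open sets actually played is not fixed in advance: it is manufactured from the moves of Player $\B$, so Player $\B$ never has the foresight on which the \ssp\ diagonalisation relies.

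I expect the main obstacle to be the construction itself --- calibrating the gluing so that both halves of the argument coexist: ``enough room'' that finitely many of Player $\B$'s points always leave Player $\A$ an open set strictly beyond them (this is where the index set of size $\mathfrak{c}^+$ and the closed discrete sets from~(ii) are needed), yet ``not too much room'', so that a countable family of open sets can never push the eventual selection of Player $\B$ out of the metrizable part over which the selector diagonalises. (Once Theorem~\ref{berner} is available, Corollary~\ref{not:arrow:c4} follows by feeding $X$ into Theorem~\ref{from:winning:to:stationary} --- the property ``to have an accumulation point'' is preserved by projections --- and using that selective sequential pseudocompactness is productive and that $\alpha(\Seq(X)_{\disc})$ is sequentially compact, which promotes the winning strategy of Player $\A$ to a stationary one on a \ssp\ product.)
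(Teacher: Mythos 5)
Your two framing observations (that $X$ can have no dense relatively countably compact subspace, by Theorem \ref{two:dense:corollaries}(ii), and that $X$ must be dense-in-itself) are correct, and your closing remark on deriving Corollary \ref{not:arrow:c4} via Theorem \ref{from:winning:to:stationary} matches the paper. But the proof itself has two genuine gaps. The first is that no witness space is actually constructed: the proposal defers the construction ("calibrating the gluing") and explicitly acknowledges that its two requirements pull against each other, which is precisely the nontrivial content of the theorem. You also appear to be conflating this example with the one in Theorem \ref{arrow:3} (the index set of size $\mathfrak{c}^+$ and the glued Cantor fibres belong to that construction). The paper's witness needs no new gluing at all: it is Berner's subspace $X=\bigcup_{\alpha<\omega_1}X_\alpha$ of $2^{\omega_1}$, where $X_\alpha=\{x: x(\alpha)=1$ and $x(\gamma)=0$ for $\gamma>\alpha\}$; each $X_\alpha$ with $\alpha\ge\omega$ is a copy of the Cantor set, and a countable-support argument (Claim \ref{hit:by:the:same:X:alpha}) shows that any countable family of non-empty open sets all meet a single $X_\alpha$, which gives selective sequential pseudocompactness.

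The second, more serious, gap is in the justification of Player $\A$'s winning strategy. You argue that because the adaptively chosen sets $U_n$ form a pairwise disjoint family of clopen sets "marching off through the index set," any selection $x_n\in U_n$ has no accumulation point. That inference is invalid as a general principle: by Definition \ref{definition_1}(iv), in any \ssp\ space \emph{every} sequence of non-empty open sets --- pairwise disjoint or not --- admits a selection with a convergent subsequence; and in the very kind of space you describe (compact metrizable fibres glued along limit points), selections running through pairwise disjoint clopen pieces are exactly the ones that are engineered to converge (compare the pairwise disjoint sets $M^n_{c,f}$ in the proof of Theorem \ref{arrow:3}, any selection from which converges to $(c,f)$). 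Disjointness of the $U_n$ therefore cannot carry the argument; one needs a structural reason why the \emph{particular} points chosen by $\B$ cannot accumulate. In the paper this is supplied by having $\A$ play the (non-disjoint) sets $\SIGMA(x_1,\dots,x_n)=V_{\alpha(x_n)+1}\cap\bigcap_{i=1}^{n}V_{\alpha(x_i)}$, where $V_\beta=\{x\in X:x(\beta)=1\}$ and $\alpha(x)$ is the unique level with $x\in X_{\alpha(x)}$: this forces the levels $\alpha(x_n)$ to increase strictly and forces $x_{n+1}(\alpha(x_j))=1$ for all $j\le n$, so any accumulation point $x$ would satisfy $x(\alpha(x_j))=1$ for all $j$, hence $\alpha(x)>\alpha(x_n)$ for every $n$, hence $x_n(\alpha(x))=0$ for every $n$, hence $x(\alpha(x))=0$ --- contradicting $x\in X_{\alpha(x)}$. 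Nothing in your proposal supplies an argument of this kind, and without it the strategy you describe does not win.
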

\begin{proof}
A. J. Berner constructed a pseudocompact space $X$ that does not contain a dense relatively countably compact subspace; 
see \cite[Section 3]{B}. Let us describe this space.
For every $\alpha\in\omega_1$, define 
\begin{equation}
\label{eq:X_alpha}
X_\alpha=\{x\in 2^{\omega_1}: x(\alpha)=1
\text{ and }
x(\gamma)=0
\text{ for all }
\gamma\in\omega_1
\text{ with }
\gamma>\alpha\}.
\end{equation}
Note that $X_\alpha$ is homeomorphic to the Cantor set $2^\omega$ for every $\alpha\ge\omega$.

We are going to show that the subspace
\begin{equation}
\label{eq:union}
X=\bigcup_{\alpha<\omega_1} X_\alpha
\end{equation}
of $2^{\omega_1}$ has the desired properties.

Note that $X_\alpha\cap X_\beta=\emptyset$ whenever $\alpha,\beta\in\omega_1$ and $\alpha\not=\beta$. This implies the following
\begin{claim}
\label{claim:1:z}
For every 
$x\in X$, there exists a unique $\alpha(x)\in\omega_1$ such that $x\in X_{\alpha(x)}$.
\end{claim}

\begin{claim}
\label{claim:2:z}
If $x\in X$, $\beta<\omega_1$ and $x(\beta)=1$, then $\beta\le \alpha(x)$.
\end{claim}
\begin{proof}
Since
$x\in X_{\alpha(x)}$
by Claim \ref{claim:1:z},
we conclude from \eqref{eq:X_alpha} that $x(\gamma)=0$
for all 
$\gamma\in\omega_1$
 with 
$\gamma>\alpha(x)$.
Since $x(\beta)=1$ by our assumption, the inequality $\beta\le\alpha(x)$ must hold.
\end{proof}

\begin{claim}
\label{hit:by:the:same:X:alpha}
For every sequence $\{W_n:n\in\N\}$ of non-empty open subsets of $2^{\omega_1}$, there exists $\alpha\in\omega_1\setminus\omega$ such that
$W_n\cap X_\alpha\not=\emptyset$ for every $n\in\N$.
\end{claim}
\begin{proof}
Without loss of generality, we may assume that each $W_n$ is a basic subset of $2^{\omega_1}$, so it has finite support $\mathrm{supp}(W_n)$. Therefore, the set
$C=\bigcup\{$supp$(W_n):n\in \N\}$ is at most countable, so 
$\alpha=\sup C+\omega+1\in\omega_1$.
An easy check that this $\alpha$ works is left to the reader.
\end{proof}

\begin{claim}
$X$ is \ssp.
\end{claim}
\begin{proof}
Let $\{U_n:n\in\N\}$ be a sequence of non-empty open subsets of $X$. For each $n\in\N$, fix an open subset $W_n$ of $2^{\omega_1}$ such that $U_n=X\cap W_n$. Clearly, $W_n$ is non-empty.
Let $\alpha\in\omega_1$ be the ordinal as in the conclusion of Claim \ref{hit:by:the:same:X:alpha}.
Then $X_\alpha$ is compact metric (being homeomorphic to the Cantor set $2^\omega$), so it is \ssp. Moreover, $X_\alpha\subseteq X$ by 
\eqref{eq:union}, so
$U_n\cap X\supseteq
U_n\cap X_\alpha=W_n\cap X\cap X_\alpha=W_n\cap X_\alpha\not=\emptyset$ for every $n\in\N$. Now the conclusion of our claim follows from 
\cite[Lemma 3.3]{DoS}.
\end{proof}

For every $\alpha<\omega_1$, 
\begin{equation}
\label{eq:V:alpha}
V_\alpha=\{x\in X: x(\alpha)=1\}
\end{equation}
 is an
open subset of $X$.

Define the strategy $\SIGMA:\Seq(X)\to \mathcal{O}(X)$ 
for $\A$
by $\SIGMA(\emptyset)=X$ and 
\begin{equation}\label{eq:W}
\SIGMA(x_1,x_2,\dots,x_n)=V_{\alpha(x_n)+1}\cap\bigcap_{i=1}^{n} V_{\alpha(x_i)}
\text{ for }
(x_1,x_2,\dots,x_n)\in \Seq(X)\setminus\{\emptyset\}.
\end{equation}
(The fact that this set is non-empty follows from
Claim \ref{hit:by:the:same:X:alpha}
and \eqref{eq:union}.)

To prove 
that $\SIGMA$ is a winning strategy for $\A$ in $\Sp(X)$, let $\TAU$ be an arbitrary strategy for  $\B$ in $\Sp(X)$.  
Let $w_{\SIGMA,\TAU}=(U_1,x_1,U_2,x_2,\dots,U_n,x_n,\dots)$
be the play produced by following strategies $\SIGMA$ and $\TAU$.
Recall that 
$U_1=\SIGMA(\emptyset)=X$
and 
$x_1=\TAU(U_1)$ 
by \eqref{eq:U_1} and \eqref{eq:x_1},
and $U_n$ and $x_n$ for $n\in\N^+$ are defined
by \eqref{eq:U_n} and
\eqref{eq:7z}.

\begin{claim}\label{claim:berner}
If $j,n\in\N$
and $1\le j\le n$, then
$x_{n+1}(\alpha(x_j))=1$ and $\alpha(x_n)<\alpha(x_{n+1}).$
\end{claim}
\begin{proof}
Note that $x_{n+1}=\TAU(U_1,\dots,U_{n+1})\in U_{n+1}=\SIGMA(x_1,\dots,x_{n})$
by \eqref{eq:7z}, \eqref{eq:rule:of:the:game}
and
\eqref{eq:U_n}.
Combining this with 
\eqref{eq:W},
we get 
$x_{n+1}\in V_{\alpha(x_n)+1}\cap V_{\alpha(x_j)}$.
From this and \eqref{eq:V:alpha},
we conclude that
$x_{n+1}(\alpha(x_n)+1)=x_{n+1}(\alpha(x_j))=1$.
Since $x_{n+1}(\alpha(x_n)+1)=1$, 
applying Claim \ref{claim:2:z} with $x=x_{n+1}$ and $\beta=\alpha(x_n)$, 
we conclude that
$\alpha(x_n)<\alpha(x_n)+1\le \alpha(x_{n+1})$.
\end{proof}

\begin{claim}
\label{claim:no:accumulation:points}
The sequence $\{x_n:n\in\N\}$ does not have an accumulation point in $X$.
\end{claim}
\begin{proof}
Suppose 
that $x\in X$ is an accumulation point of the sequence $\{x_n:n\in\N\}$.

Let $j\in \N$ be arbitrary.
Since $x$ is an accumulation point of $\{x_n:n\in\N\}$,
the $\alpha(x_j)$th coordinate 
$x(\alpha(x_j))$ of $x$ is an accumulation point of the sequence
$\{x_{n+1}(\alpha(x_j)):n\in\N\}$, and therefore, also an accumulation point of its cofinal subsequence
$\{x_{n+1}(\alpha(x_j)):n\geq j\}$.
Since $x_{n+1}(\alpha(x_j))=1$ for all $n\ge j$ by 
Claim \ref{claim:berner},
it follows that $x(\alpha(x_j))=1$.
Therefore, $\alpha(x_j)\leq \alpha(x)$ by Claim \ref{claim:2:z}.

Let $n\in\N$ be arbitrary.
By Claim \ref{claim:berner}, 
$\alpha(x_n)<\alpha(x_{n+1})$.
As was shown in the preceding paragraph,
$\alpha(x_{n+1})\leq \alpha(x)$.
Since $x_n\in X_{\alpha(x_n)}$ by Claim \ref{claim:1:z}
and $\alpha(x)>\alpha(x_n)$, from 
\eqref{eq:X_alpha} we conclude that
$x_n(\alpha(x))=0$.

Since $x$ is an accumulation point of $\{x_n:n\in\N\}$,
the $\alpha(x)$th coordinate 
$x(\alpha(x))$ of $x$ is an accumulation point of the sequence
$\{x_{n}(\alpha(x)):n\in\N\}$.
Since $x_{n}(\alpha(x))=0$ for every $n\in\N$, we conclude that
$x(\alpha(x))=0$.
On the other hand,
$x\in X_{\alpha(x)}$ by Claim \ref{claim:1:z}, so
$x(\alpha(x))=1$
by \eqref{eq:X_alpha}.
This contradiction finishes the proof of our claim.
\end{proof}

By Claim \ref{claim:no:accumulation:points},
the sequence $\{x_n:n\in\N\}$ does not have an accumulation point in $X.$ This proves that $\A$
wins the  play $w_{\SIGMA,\TAU}$. Since $\TAU$ was an arbitrary strategy for $\B$ in $\Sp(X)$, this proves that $\SIGMA$ is a winning strategy for  $\A$ in $\Sp(X)$.
\end{proof}

The next corollary shows 
that arrow 
$(c_4)$ of Diagram 3 is not reversible.
This result is due to Y.~Hirata \cite{Hirata}; see Remark \ref{historic:remark} below. 

\begin{cor}\label{not:arrow:c4}
Let $\mathcal{S}$ be a topological property of sequences preserved by 
projections which is 
weaker 
than the 
property 
from item (i) and stronger than the property from item (v) of Example \ref{examples}. 
Then there exists a selectively $\mathcal{S}$ space $Z$ such that
Player $\A$ has a stationary winning strategy in the game $\OP(Z, \mathcal{S})$.
\end{cor}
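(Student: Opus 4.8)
The plan is to realize $Z$ as a product $Z=X\times\alpha(\Seq(X)_{\disc})$, where $X$ is the \ssp\ space produced in Theorem \ref{berner}, and to obtain Player $\A$'s stationary winning strategy on $Z$ by feeding $\A$'s (non-stationary) winning strategy on $X$ into Theorem \ref{from:winning:to:stationary}. So the proof will have two halves: producing the stationary winning strategy for $\A$ in $\OP(Z,\mathcal{S})$, and checking that $Z$ is \sel.

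For the first half I would upgrade Player $\A$'s winning strategy from $\Sp(X)$ to $\OP(X,\mathcal{S})$. By Theorem \ref{berner}, Player $\A$ has a winning strategy in $\Sp(X)$, and by Definition \ref{def:Ssp:Sp}(ii) this game is exactly $\OP(X,\mathcal{S}_0)$, where $\mathcal{S}_0$ is the property from item (v) of Example \ref{examples}. Since $\mathcal{S}$ is stronger than $\mathcal{S}_0$, Proposition \ref{winning:strategies:for:R:and:S}(i) shows that the same strategy is a winning strategy for Player $\A$ in $\OP(X,\mathcal{S})$. Because $\mathcal{S}$ is preserved by projections, Theorem \ref{from:winning:to:stationary} now applies to $X$ and produces a stationary winning strategy for Player $\A$ in $\OP(X\times\alpha(\Seq(X)_{\disc}),\mathcal{S})=\OP(Z,\mathcal{S})$.

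For the second half I would check that $Z$ is \sel. The space $X$ is \ssp\ by Theorem \ref{berner}. The factor $Y=\alpha(\Seq(X)_{\disc})$ is \seqc: given any sequence in $Y$, either some value is attained infinitely often, yielding a constant convergent subsequence, or one extracts a subsequence of pairwise distinct points of $\Seq(X)_{\disc}$, and such a subsequence converges to the point at infinity because every neighbourhood of that point is cofinite in $Y$. Hence $Y$ is \ssp\ as well. Since any Tychonoff product of \ssp\ spaces is \ssp, the product $Z=X\times Y$ is \ssp. Finally, \ssp\ implies \sel\ whenever $\mathcal{S}$ is weaker than the property from item (i) of Example \ref{examples}: given non-empty open sets $U_n\subseteq Z$, choose $x_n\in U_n$ so that $\{x_n:n\in\N\}$ has a subsequence converging in $Z$; that sequence then satisfies $\mathcal{S}$ in $Z$ by Definition \ref{comparing:strength:of:R:and:S}. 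Thus $Z$ is \sel, and $Z$ is the required example.

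The real content has already been absorbed by Theorem \ref{berner} and Theorem \ref{from:winning:to:stationary}, so what remains is essentially bookkeeping. The one point that deserves care is the verification that adjoining the factor $\alpha(\Seq(X)_{\disc})$ does not destroy selective $\mathcal{S}$; I would handle it through the chain "$X$ \ssp, $\alpha(\Seq(X)_{\disc})$ \seqc\ hence \ssp, product of \ssp\ spaces \ssp, \ssp\ $\Rightarrow$ \sel" described above, the only nontrivial link being the elementary observation that the one-point compactification of a discrete space is sequentially compact.
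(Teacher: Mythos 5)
Your proof is correct and follows essentially the same route as the paper's: upgrade Player $\A$'s winning strategy from $\Sp(X)$ to $\OP(X,\mathcal{S})$ via Proposition \ref{winning:strategies:for:R:and:S}(i), feed it into Theorem \ref{from:winning:to:stationary}, and verify that the resulting product is \ssp\ and hence \sel. The only cosmetic differences are that the paper takes $Z=\alpha(X_{\disc})\times X$ (homeomorphic to your $X\times\alpha(\Seq(X)_{\disc})$, since $|\Seq(X)|=|X|$) and cites \cite[Lemma 4.1]{DoS} for the product being \ssp, where you instead observe that the one-point compactification of a discrete space is \seqc\ and invoke the productivity of selective sequential pseudocompactness.
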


\begin{proof}
Let $X$ be the space from Theorem \ref{berner}. Then $X$ is \ssp. Observe that $\alpha(X_{\disc})$ is \seqc.
By \cite[Lemma 4.1]{DoS}, $Z=\alpha(X_{\disc})\times X$ is \ssp. 
Let $\mathcal{S}_1$ be the property from item (i) of Example \ref{examples}.
Since $Z$ is \ssp, $Z$ is selectively $\mathcal{S}_1$ by Remark \ref{names:remark}(i).
Since $\mathcal{S}$ is weaker than  $\mathcal{S}_1,$
$Z$ is selectively $\mathcal{S}$. 

Let $\mathcal{S}_2$ be the property from item (v) of Example \ref{examples}. 
By Definition \ref{def:Ssp:Sp}(ii),
the game $\OP(X,\mathcal{S}_2)$ coincides with the game $\Sp(X)$.
By Theorem \ref{berner}, $\A$ has a winning strategy $\SIGMA$ in $\Sp(X)$, or equivalently, $\OP(X,\mathcal{S}_2)$.
Since $\mathcal{S}$ is stronger than $\mathcal{S}_2$,
Proposition \ref{winning:strategies:for:R:and:S}(i) implies that
$\SIGMA$ is a winning strategy for $\A$ in the game $\OP(X,\mathcal{S})$. By Theorem \ref{from:winning:to:stationary},
Player $\A$ has a stationary winning 
strategy in $\OP(Z,\mathcal{S}).$
\end{proof}

\begin{remark}
\label{historic:remark}
The second listed author presented Theorem \ref{berner}
at Yokohama Topology Seminar on October 27, 2017. Clearly, 
this theorem means that either arrow $(c_3)$ or arrow $(c_4$) of Diagram~3 is not reversible. At that time, the authors were not able to determine which of these two arrows is not reversible.
Soon thereafter, Y. Hirata proved that
the product $X\times \alpha(\omega_1)$ of the space $X$ from  
Theorem \ref{berner} and the one-point compactification $\alpha(\omega_1)$ of the discrete space of size $\omega_1$ 
is selectively sequentially pseudocompact, yet Player $\A$ has a stationary winning strategy in the game $\Sp(X\times  \alpha(\omega_1))$ on the product 
\cite{Hirata}, thereby establishing that arrow $(c_4$) of Diagram~3 is not reversible. 
The authors were inspired by an idea of Y.~Hirata \cite{Hirata} of using a product of a given space $X$ with the one-point compactification of a discrete space and subsequently proved Theorem~\ref {from:winning:to:stationary}, which in turn implies Corollary \ref{not:arrow:c4}.
\end{remark}

\section{Open questions}

We do not know if  arrows $(c_2)$ and $(c_3)$ of Diagram 3 are reversible.

\begin{question}
\label{que:7.1}
Let $\mathcal{S}$ be a topological property of sequences  
weaker 
than the 
property 
from item (i) and stronger than the property from item (v) of Example \ref{examples}. 
\begin{itemize}
\item[(i)]
Is the game $\OP(X,\mathcal{S})$ determined?
Equivalently, is arrow ($c_2$) in Diagram~3 reversible?
What can be said for properties $\mathcal{S}$ from items (i) and (v) of Example \ref{examples} themselves?
\item[(ii)]
Is arrow ($c_3$) in Diagram~3 reversible? What can be said for properties $\mathcal{S}$ from items (i) and (v) of Example \ref{examples} themselves?
\end{itemize}
\end{question}

\begin{question}
\label{question:new}
Let $\mathcal{S}$ be a topological property of sequences  
weaker 
than the 
property 
from item (i) and stronger than the property from item (v) of Example \ref{examples}. 
Do any of the four arrows from Diagram~3 become reversible for the game $\OP(X,\mathcal{S})$ on:
\begin{itemize}
\item[(i)]
 a compact space $X$?
\item[(ii)]
a topological group $X$?
\item[(iii)]
the function space $X$ of the form $C_p(Y,G)$ for a topological space $Y$ and a topological group $G$ such that $C_p(Y,G)$ is dense in $G^Y$? (Here $C_p(Y,G)$ denotes the topological group of all continuous functions from $Y$ to $G$ endowed with the topology of pointwise convergence \cite{SS}.)
\end{itemize}
What can be said for properties $\mathcal{S}$ from items (i) and (v) of Example \ref{examples} themselves?
\end{question}

Corollary \ref{dyadic:result} justifies the following question:
\begin{question}
Let $G$ be a group such that the closure of every countable subgroup of $G$ is compact.
Does Player $\B$ have a stationary winning strategy in the \ssp\ game $\Ssp(G)$ on $G$?
\end{question}

We finish with a stronger  version of \cite[Question 1.5]{DoS2}: 

\begin{question}
If  an Abelian group $G$ admits a pseudocompact group topology, does it then admit a group topology $\mathcal{T}$ such that 
Player $\B$ has a stationary winning strategy in:
\begin{itemize}
\item[(i)]  the \ssp\ game $\Ssp(G,\mathcal{T})$?
\item[(ii)] the selectively pseudocompact game $\Sp(G,\mathcal{T})$)?
\end{itemize}
\end{question}

Additional questions related to this topic can be found in \cite{DoS3}.

\medskip
\noindent
{\bf Acknowledgements:\/} 
We are grateful to Professor Franklin Tall for his kind suggestion to consider a game-theoretic version of our selective sequential pseudocompactness property from \cite{DoS} during his visit 
to Ehime University in December 2016. It is this suggestion which led us to the introduction of games $\Ssp(X)$ and $\Sp(X)$ from Section \ref{sec:5}. 

The second listed author would like to thank cordially Professor Yasushi Hirata for answering his question raised at the talk at Yokohama Topology Seminar on October 27, 2017 and sending the solution in \cite{Hirata}. An idea from \cite{Hirata} inspired the authors to obtain the general reduction result stated in Theorem~\ref{from:winning:to:stationary}. (We refer the reader to Remark \ref{historic:remark} for additional details.)

The second listed author would like to thank Professor Boaz Tsaban for his invitation to give an invited lecture presenting results of this paper at the Conference ``Frontiers of Selection Principles'' held during August 27 -- September 1, 2017 at Cardinal Stefan Wyszy\'{n}ski University (Warsaw, Poland).

This paper was written during the first listed author's stay at the Department of Mathematics 
of Faculty of Science of Ehime University (Matsuyama, Japan)
in the capacity of Visiting Foreign Researcher under the support by 
CONACyT, M\'exico: Estancia Posdoctoral al Extranjero 178425/277660. 
He would like to thank CONACyT for its support and the host institution for its hospitality.


\begin{thebibliography}{99} 

\bibitem{B} A. J. Berner, \textit{Spaces with dense conditionally compact subsets,}
Proc. Amer. Mat. Soc. 81 (1981), 137--142.

\bibitem{Be} A. R. Bernstein,  
\textit{A new kind of compactness for topological spaces\/},
Fund. Math., 66 (1970), 185--193. 

\bibitem{DoS} A. Dorantes-Aldama, D. Shakhmatov, 
\textit{Selective sequential pseudocompactness}, 
Topology Appl.  222 (2017), 53--69.

\bibitem{DoS2} A. Dorantes-Aldama, D. Shakhmatov, 
\textit{Selectively sequentially pseudocompact group topologies on torsion and torsion-free Abelian groups\/},
Topology Appl. 230 (2017), 562--577.

\bibitem{DoS3} A. Dorantes-Aldama, D. Shakhmatov, 
\textit{Two open-point games related to selective (sequential) pseudocompactness, with application to 1-$cl$-starcompactness property of Matveev\/}, in: Research Trends in Set-Theoretic and Geometric Topology and their cooperation with various branches), RIMS K\^{o}ky\^{u}roku 2064 (2018), 41--52.

\bibitem{En} R. Engelking, 
  \textit{General topology},
Sigma Series in Pure Mathematics, Heldermann, Berlin, 1989.

\bibitem{GO} S. Garc\'ia-Ferreira, Y. F. Ortiz-Castillo,
  \textit{Strong pseudocompact properties},
Comment. Math. Univ. Carolin. 55, no. 1 (2014), 101--109.

\bibitem{GT} S. Garc\'ia-Ferreira, A. H. Tomita,
  \textit{A pseudocompact group which is not strongly pseudocompact},
Topology Appl.  192 (2015), 138--144.

\bibitem{Hirata}
Y. Hirata, {\em An example of selectively sequentially pseudocompact space $X$ such that Player $A$ has a stationary winning strategy in the game $OP(X;$ cluster point$)$\/}, e-mail communication to the second listed author dated October 29, 2017.

\bibitem{M} M.~Matveev, 
\textit{A survey on star covering properties,} Topology Atlas  preprint no. 330 (1998).

\bibitem{R} D. Raghavan, 
  \textit{There is a van Douwen MAD family},
Trans. Amer. Math. Soc. 362 (2010), 
5879--5891.

\bibitem{SS}
D.~Shakhmatov, J.~Sp\v{e}v\'ak,
\textit{Group-valued continuous functions with the topology of pointwise convergence\/},
 Topology Appl.  157 (2010), 1518--1540.

\end{thebibliography}
\end{document}